\newtheorem*{introthm}{Theorem}
\newtheorem{theorem}{Theorem}[section]
\newtheorem{lemma}[theorem]{Lemma}
\newtheorem{proposition}[theorem]{Proposition}
\newtheorem{corollary}[theorem]{Corollary}
\theoremstyle{definition}
\newtheorem{definition}[theorem]{Definition}
\newtheorem{remark}[theorem]{Remark}
\def\Eff{{\rm Eff}}
\def\Nef{{\rm Nef}}
\def\SAmple{{\rm SAmple}}
\def\cc{{\mathbb C}}
\def\rr{{\mathbb R}}
\def\nn{{\mathbb N}}
\def\qq{{\mathbb Q}}
\def\pp{{\mathbb P}}
\def\Exc{\operatorname{Exc}}
\def\Aut{\operatorname{Aut}}
\def\Sym{\operatorname{Sym}}
\def\Pic{\operatorname{Pic}}
\def\chull{\operatorname{Chull}}
\def\Osh{{\mathcal O}}
\begin{document}

\title{Cox rings of surfaces and the anticanonical Iitaka dimension}

\author{Michela Artebani}
\address{
Departamento de Matem\'atica, \newline
Universidad de Concepci\'on, \newline
Casilla 160-C,
Concepci\'on, Chile}
\email{martebani@udec.cl}

\author{Antonio Laface}
\address{
Departamento de Matem\'atica, \newline
Universidad de Concepci\'on, \newline
Casilla 160-C,
Concepci\'on, Chile}
\email{alaface@udec.cl}

\subjclass[2000]{14J26, 14C20}
\keywords{Cox rings, rational surfaces, Iitaka dimension} 
\thanks{The first author has been partially 
supported by Proyecto FONDECYT Regular 2009, N. 1090069.
The second author has been partially supported 
by Proyecto FONDECYT Regular 2008, N. 1080403.}

\maketitle

\begin{abstract}
In this paper we investigate the relation between the finite generation of the Cox ring $R(X)$ of a smooth projective surface $X$ and its anticanonical Iitaka dimension $\kappa(-K_X)$.
\end{abstract}

\section*{Introduction}
 This paper discusses the problem of deciding which smooth projective surfaces over the complex numbers have finitely generated Cox ring.
 More precisely, if the Picard group $\Pic(X)$ of such a surface $X$ is finitely generated, or equivalently $q(X)=0$, then the {\em Cox ring} of $X$ is:
\[
R(X):=\bigoplus_{D\in \Pic(X)} H^0(X,\mathcal O_X(D)).
\]
The Cox ring of a surface $X$ is known to be a polynomial ring if and only if $X$ is toric \cite{cox, hk}. In \cite[Cor.~5.1]{to2} Totaro proved that the Cox ring of a klt Calabi-Yau pair of dimension two over $\cc$ is finitely generated if and only if its effective cone is rational polyhedral. 
In the recent paper ~\cite{tvv} by Testa, V\'arilly-Alvarado and Velasco, the authors prove that if $X$ is a smooth rational surface with $-K_X$ big, i.e.~such that $\dim\varphi_{|-nK_X|}(X)=2$ for $n$ big enough, then 
$R(X)$ admits a finite number of generators. These results motivate the research for a relation between the {\em anticanonical Iitaka dimension} of $X$ (see \cite[Def.~2.1.3]{laz}):
\[
\kappa(-K_X) := \max\{\dim \varphi_{|-nK_X|}(X) : n\in\nn\},
\]
whose values can be $2$, $1$, $0$ and $-\infty$, and the finite generation of $R(X)$.
Let $\Eff(X)$  be the convex cone generated by classes of effective divisors in $N^1(X)=\Pic(X)\otimes \rr/\equiv$, where $\equiv$ is numerical equivalence.
Our first result is the following.
\begin{introthm}
Let $X$ be a smooth rational surface with $\kappa(-K_X)=1$. Then the following are equivalent:
\begin{enumerate}
\item the effective cone $\Eff(X)$ is rational polyhedral;
\item the Cox ring $R(X)$ is finitely generated;
\item $X$ contains finitely many $(-1)$-curves.
\end{enumerate}
\end{introthm}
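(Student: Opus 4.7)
I would prove the three conditions equivalent via the cyclic chain \emph{(ii)}$\Rightarrow$\emph{(i)}$\Rightarrow$\emph{(iii)}$\Rightarrow$\emph{(ii)}, with the assumption $\kappa(-K_X)=1$ entering only in the last step. For \emph{(ii)}$\Rightarrow$\emph{(i)}, a finite set of homogeneous generators $f_1,\dots,f_N$ of $R(X)$ has degrees $D_1,\dots,D_N\in\Pic(X)$ that generate $\Eff(X)$ as a cone, so $\Eff(X)$ is rational polyhedral. For \emph{(i)}$\Rightarrow$\emph{(iii)}, each $(-1)$-curve is irreducible with $C^2<0$ and so spans an extremal ray of $\Eff(X)$; a rational polyhedral cone has only finitely many extremal rays.

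For the main implication \emph{(iii)}$\Rightarrow$\emph{(ii)}, the starting point is the Zariski decomposition $-K_X=P+N$. Since $\kappa(P)=\kappa(-K_X)=1$, the nef part satisfies $P^2=0$, and some positive multiple of $P$ is base-point free, defining an anticanonical fibration $f\colon X\to\pp^1$ whose general fiber is a curve of arithmetic genus one in $|-mK_X|$. Next I would classify the irreducible negative curves $C$ on $X$ by the sign of $-K_X\cdot C=P\cdot C+N\cdot C$: using adjunction, curves with $-K_X\cdot C>0$ are necessarily $(-1)$-curves; curves with $-K_X\cdot C<0$ must lie in $\mathrm{supp}(N)$ by properties of the Zariski decomposition; and curves with $-K_X\cdot C=0$ are $(-2)$-curves, supported either in $\mathrm{supp}(N)$ or in fibers of $f$.

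Hypothesis (iii) controls the $(-1)$-curves, while $\mathrm{supp}(N)$ is finite by construction. For the $(-2)$-curves contained in fibers of $f$ I would exploit that the $(-1)$-curves serve as sections or multisections of $f$, so their finiteness forces the Mordell--Weil group of $f$ to be finite; the Shioda--Tate formula then bounds the total number of components of reducible fibers, hence of these $(-2)$-curves. Consequently $X$ has only finitely many negative curves, and from this \emph{(i)} follows by the standard fact that on a smooth rational surface with small anticanonical Iitaka dimension the effective cone is rational polyhedral exactly when the negative curves are finite (the boundary of $\Nef(X)$ is controlled by the single isotropic ray spanned by $P$).

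Finally, to pass from \emph{(i)} to full finite generation \emph{(ii)} one must verify that every nef class on $X$ is semi-ample. For classes in the interior of $\Nef(X)$ one applies the Kawamata--Shokurov basepoint-free theorem to $D+\varepsilon(-K_X)$; boundary classes are either proportional to $P$, hence already semi-ample, or reduce via $f$ to semi-ample classes on $\pp^1$. \textbf{Main obstacle:} the critical step is the passage from finitely many $(-1)$-curves to finitely many negative curves, which is the only place where the hypothesis $\kappa(-K_X)=1$ really bites. Without the fibration $f$ a rational surface can have few $(-1)$-curves but infinitely many $(-2)$-curves, so controlling the fiber components through the Shioda--Tate mechanism is essential, and the argument must be robust enough to cover degenerate, non-minimal, or quasi-elliptic fibrations that can arise from the $|-mK_X|$ linear systems.
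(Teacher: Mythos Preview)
Your cyclic chain and the classification of negative curves via the sign of $-K_X\cdot C$ match the paper's strategy closely. However, two points deserve correction.

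First, what you flag as the ``main obstacle'' is in fact trivial, and your proposed mechanism for it is unnecessary. Once you have the fibration $f\colon X\to\pp^1$ induced by a multiple of $P$, the $(-2)$-curves orthogonal to $P$ are contained in fibers of $f$. But any morphism from a surface to a curve has only finitely many reducible fibers, and each fiber has finitely many components; hence the set of such $(-2)$-curves is automatically finite. There is no need to invoke Mordell--Weil finiteness or Shioda--Tate, and no robustness issue with non-minimal fibers (quasi-elliptic fibrations do not occur over $\cc$). The paper dispatches this in one line: ``$\varphi_{|C|}$ has finitely many reducible fibers''.

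Second, the genuine work---which you underplay---is the proof that $\Nef(X)=\SAmple(X)$, needed to pass to finite generation of $R(X)$. Your sketch here is not correct as stated. Classes in the interior of $\Nef(X)$ are ample, so Kawamata--Shokurov is not the issue there. The problem is on the boundary: a nef class $D$ with $D^2>0$ is neither proportional to $P$ nor does it ``reduce via $f$ to $\pp^1$'', and applying the basepoint-free theorem would require $aD-K_X$ nef for some $a$, which can fail when $D$ is orthogonal to a component $C$ of $N$ with $N\cdot C<-P\cdot C$. The paper instead argues directly: for $D$ nef and big, the curves orthogonal to $D$ have negative definite intersection matrix (Hodge) and, using $\kappa(-K_X)\geq 1$, every effective divisor supported on them has $p_a\leq 0$; Artin's contractibility criterion then produces a contraction on which the push-forward of $D$ is ample. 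For $D$ nef with $D^2=0$ one shows $[D]$ is a rational multiple of $[P]$ (again via Hodge), reducing to the semiampleness of $P$. This is where the hypothesis $\kappa(-K_X)\geq 1$ is actually used in an essential way.
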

Moreover we prove that, if $\kappa(-K_X)=1$, then the effective cone of $X$ is rational polyhedral if and only if the same is true for its relative minimal model.
This allows us to show (Corollary \ref{infty}) that there exist surfaces with finitely generated Cox ring and anticanonical Iitaka dimension $1$ of any Picard number $\geq 9$.

In case $-K_X$ is nef we prove the following result, which relies on Nikulin's description of surfaces with rational polyhedral effective cone \cite[Ex.~1.4.1]{n1}.

\begin{introthm}
Let $X$ be a smooth projective surface with $q(X)=0$ and $-K_X$ 
nef. Then $R(X)$ is finitely generated
if and only if one of the following holds:
\begin{enumerate}
\item  $X$ is the minimal resolution 
of singularities of a Del Pezzo surface with Du Val 
singularities;
\item  $\varphi_{|-mK_X|}$ is an elliptic fibration for some $m>0$ and the Mordell-Weil group of the Jacobian fibration of $\varphi_{|-mK_X|}$ is finite;
\item $X$ is either a K3-surface or an Enriques surface with finite automorphism group $\Aut(X)$.
\end{enumerate}
\end{introthm}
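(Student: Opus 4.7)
The plan is to split according to the Iitaka dimension $\kappa(-K_X)$, which under the hypotheses $-K_X$ nef and $q(X)=0$ must lie in $\{0,1,2\}$: by Riemann--Roch and the classification of surfaces, either $X$ is rational or (since $q(X)=0$) $X$ is a K3 or an Enriques surface, and these possibilities match the three values of $\kappa$. Specifically $\kappa=2$ gives weak Del Pezzo surfaces (case (i)), $\kappa=1$ gives rational surfaces carrying an anticanonical elliptic pencil (case (ii)), and $\kappa=0$ gives K3 and Enriques surfaces (case (iii)).

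For $\kappa(-K_X)=2$ the divisor $-K_X$ is big and nef, so for $m\gg 0$ the map $\varphi_{|-mK_X|}$ is a birational morphism contracting exactly the $(-2)$-curves to rational double points, presenting $X$ as the minimal resolution of a Del Pezzo surface with Du Val singularities. Finite generation of $R(X)$ in this case is classical, going back to Batyrev--Popov in the smooth case and extended to the weak Del Pezzo case in \cite{tvv}.

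For $\kappa(-K_X)=1$, $X$ is necessarily rational, and $|-mK_X|$ induces a morphism to a curve whose general fiber $F$ satisfies $F^2=0$ and, by adjunction applied to a numerical relation $-mK_X\equiv kF$, has arithmetic genus one; hence we have a relatively minimal elliptic fibration. By the first theorem of the paper, $R(X)$ is finitely generated exactly when the set of $(-1)$-curves on $X$ is finite, so what remains is to relate this to finiteness of the Mordell--Weil group of the Jacobian fibration. Every $(-1)$-curve $E$ satisfies $-K_X\cdot E=1$ by adjunction, so $E$ is a multisection of the elliptic fibration; after passing to the Jacobian (which preserves the relevant part of the Picard lattice) one identifies the $(-1)$-curves with sections, and these form a torsor under the Mordell--Weil group.

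For $\kappa(-K_X)=0$, $mK_X\sim 0$ for some $m\geq 1$, so $X$ is K3 or Enriques. Nikulin's description cited in the statement provides, for such surfaces, the equivalence between rational polyhedrality of $\Eff(X)$ and finiteness of $\Aut(X)$, and in this situation Totaro's criterion for klt Calabi--Yau pairs of dimension two (recalled in the introduction) yields finite generation of $R(X)$. The principal obstacle will be the $\kappa=1$ case, specifically the careful identification of $(-1)$-curves on $X$ with elements of the Mordell--Weil group of the Jacobian fibration: one must handle the possible presence of multiple fibers, which prevents $|-mK_X|$ itself from having a section, and carry the finiteness statement across the passage from $X$ to its Jacobian without losing information.
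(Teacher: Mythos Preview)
Your trichotomy by $\kappa(-K_X)$ hides a genuine gap in the case $\kappa(-K_X)=0$. You assert that $\kappa(-K_X)=0$ forces $mK_X\sim 0$ for some $m$, hence $X$ is K3 or Enriques. This is false: there exist \emph{rational} surfaces $X$ with $-K_X$ nef, $K_X^2=0$, $K_X\not\equiv 0$, and $h^0(-mK_X)=1$ for every $m>0$ (so $\kappa(-K_X)=0$). Nikulin \cite[Ex.~1.4.1]{n1} classifies exactly three such surfaces with rational polyhedral effective cone. For these, $-K_X$ is nef but not semiample (a base-point-free $|-mK_X|$ with zero-dimensional image would force $-mK_X\sim 0$), so by \cite[Cor.~2.6]{ahl} the Cox ring $R(X)$ is \emph{not} finitely generated. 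Since these $X$ lie in none of (i), (ii), (iii), they are precisely the surfaces that must be excluded to get the ``only if'' direction, and your argument does not touch them.

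The paper avoids this pitfall by organising the case analysis differently: first separate $K_X\equiv 0$ (K3/Enriques, handled via \cite{ahl}) from $K_X\not\equiv 0$ (rational); then, in the rational case, split by $K_X^2>0$ versus $K_X^2=0$, and in the latter by $\kappa(-K_X)=1$ versus $\kappa(-K_X)=0$. The rational $\kappa=0$ surfaces are then disposed of exactly as above. Your $\kappa=2$ case is fine. In the $\kappa=1$ case your sketch via ``$(-1)$-curves $\leftrightarrow$ sections of the Jacobian'' can be made to work, but note that the paper instead goes through Proposition~\ref{class}(ii): finiteness of the Mordell--Weil group of the Jacobian is equivalent (via Shioda--Tate and the fact that $X$ and its Jacobian share singular fibre types) to the $(-2)$-curves on $X$ filling out a rank-$8$ configuration of extended Dynkin diagrams, which Nikulin shows is equivalent to $\Eff(X)$ being rational polyhedral; then Theorem~\ref{equiv} finishes. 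This bypasses the delicate torsor identification you flag as the ``principal obstacle''.
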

Surfaces of type (i) are classically known by \cite{na}, surfaces in (ii)  can be classified by means of~\cite{d, cd} (for $m=1$) and Ogg-Shafarevich theory \cite{o,sh}, while surfaces of type (iii)  have been classified in a  series of papers by Nikulin and Kond\=o (see ~\cite[Ex.~1.4.1]{n1} for precise references).

The paper is structured as follows. In Section 1 we introduce four cones in $N^1(X)$: the effective cone, the closed light cone, the nef cone and the semiample cone. Section 2 deals with the structure of the effective cone of rational surfaces with $\kappa(-K_X)\geq 0$. Our main result here is the following.
\begin{introthm}
Let $X$ be a smooth rational surface with $\kappa(-K_X)\geq 0$ and $\rho(X)\geq 3$, then $\overline{\Eff(X)}=\overline{E(X)}$, where $E(X)$ is the cone generated by classes of integral curves of $X$ with negative self-intersection.
\end{introthm}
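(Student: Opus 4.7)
My plan is to prove the non-trivial inclusion $\overline{\Eff(X)}\subseteq\overline{E(X)}$; the reverse is immediate since every negative curve is effective. Because $\Eff(X)$ is generated by classes of integral curves and integral curves with negative self-intersection already lie in $E(X)$, it suffices to show that for every integral curve $C$ on $X$ with $C^2\geq 0$ the class $[C]$ lies in $\overline{E(X)}$.

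Using $\kappa(-K_X)\geq 0$, fix an effective divisor $D\in|-mK_X|$ for some $m\geq 1$ and write $D=\sum a_iD_i$ with the $D_i$ integral. First I would treat the case where $C$ is not a component of $D$. Then $C\cdot D\geq 0$, so $-K_X\cdot C\geq 0$; together with $C^2\geq 0$ and the vanishing $h^0(K_X-C)=0$ (which follows from $(K_X-C)\cdot H<0$ for any ample $H$, using that $-K_X$ is pseudo-effective), Riemann--Roch yields $h^0(C)\geq 1+\tfrac{1}{2}(C^2-K_X\cdot C)$. When $C^2-K_X\cdot C\geq 2$ the system $|C|$ contains a pencil, which I would degenerate to a reducible member, split off a negative-curve component $N$, and iterate on $[C]-[N]$; since each peeling strictly decreases the intersection with a fixed ample class, the process terminates (or accumulates) in $\overline{E(X)}$.

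The complementary cases are: $C$ a component of $D$, and $C$ with $C^2=K_X\cdot C=0$ (the only configuration in which the moving step can fail for a non-component). Both are handled by a single reduction: from $D\sim -mK_X$ one has the linear equivalence $-m[K_X]=\sum a_i[D_i]$, which lets us solve for $[C]$ in terms of $[-K_X]$ and the finitely many remaining $[D_j]$ of non-negative self-intersection. Applying the argument of the previous paragraph recursively to the $D_j$'s collapses the problem to showing that $[-K_X]\in\overline{E(X)}$.

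The main obstacle is this last step in the case $(-K_X)^2=0$, where $\varphi_{|-mK_X|}\colon X\to B$ is a (quasi-)elliptic fibration and $[-K_X]$ lies on the boundary $\partial\overline{\Eff(X)}$. One must exhibit infinitely many integral curves with negative self-intersection whose rays accumulate at $\rr_{\geq 0}[-K_X]$; the hypothesis $\rho(X)\geq 3$ is essential here, since on $\ff_n$ the analogous ray is isolated. I would construct the required sequence from $(-1)$-curves arising as sections or pieces of reducible fibres of the fibration, or as strict transforms of rational curves on a relative minimal model under iterated elementary transformations, whose existence is forced by the non-minimality implied by $\rho(X)\geq 3$; since any accumulation ray of $(-1)$-curve classes is isotropic and pairs non-negatively with $-K_X$, the only candidate accumulation ray on the boundary of $\overline{\Eff(X)}$ is $\rr_{\geq 0}[-K_X]$, which closes the argument.
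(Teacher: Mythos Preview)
Your proposal has real gaps at three points. The ``degenerate $|C|$ to a reducible member and split off a negative-curve component'' step is unjustified: a moving linear system need not contain a reducible member (an irreducible base-point-free pencil is the obvious obstruction), and when it does the components need not have negative self-intersection; moreover, after one peeling the residual class is no longer represented by an integral curve, so the Riemann--Roch input you used ($-K_X\cdot C\geq 0$ from $C\nsubseteq D$) is lost. The reduction via the relation $-m[K_X]=\sum a_i[D_i]$ does not express $[C]$ as a \emph{non-negative} combination of $[-K_X]$ and the remaining $[D_j]$, so it cannot place $[C]$ in $\overline{E(X)}$; in the case $C^2=K_X\cdot C=0$ the actual mechanism is the Hodge index theorem (forcing $[C]$ to be a rational multiple of the nef part of $-K_X$), which you do not invoke. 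Most seriously, the final step---producing infinitely many $(-1)$-curves accumulating at $\rr_{\geq 0}[-K_X]$---is exactly where the Cone Theorem does the work, and your informal sources of $(-1)$-curves do not suffice: when the anticanonical fibration has finite Mordell-Weil group there are only finitely many sections, yet the theorem still holds because the fibre class is then a non-negative combination of the finitely many components of reducible fibres. Your sketch does not separate these two regimes, nor does it treat $(-K_X)^2\ne 0$.

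The paper avoids all of this by working with cones rather than individual curves. It first shows that $\overline{\Eff(X)}$ equals the convex hull of $\overline{E(X)}$ and the closed light cone $L_a(X)$, via the elementary observation that any effective class with negative self-intersection has a negative curve in its fixed part; this replaces your peeling and sidesteps the reducibility issue entirely. It then proves $L_a(X)\subset\overline{E(X)}$ in two halves: the $K_X$-positive part of $\partial L_a(X)$ is pushed into the $K_X$-non-positive part using that any big class $D$ with $D\cdot(-K_X)<0$ has a negative fixed component, and the $K_X$-negative part of $\partial L_a(X)$ is handled directly by the Cone Theorem, whose extremal rays in that range are spanned by classes in $E(X)$. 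The Cone Theorem is not bypassed.
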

In Section 3 we prove that on a smooth rational surface $X$ with $\kappa(-K_X)=1$  every nef $\qq$-divisor is semiample.
The problem of finite generation of the Cox ring of such surfaces is considered in Section 4.
Section 5 is devoted to the problem of finite generation of $R(X)$ under the hypothesis $-K_X$ nef. 
Finally, Section 6 shows an example of a non-rational surface $X$ with $\rho(X)=2$ and rational polyhedral effective cone whose Cox ring does not admit a finite set of generators.\\

\noindent{\em Acknowledgments:} It is a pleasure to thank Tommaso de Fernex and Damiano Testa for several useful comments which helped us to improve and clarify this work.
We are also grateful to Jinhyung Park, who informed us about a mistake in the previous version of Lemma 4.4 and suggested how to fix it.
\section{Basic setup}

In what follows $X$ will denote a smooth projective surface defined over the complex numbers. 
Given a divisor $D$ of $X$ we will adopt the short notation $H^i(D)$ for the cohomology group $H^i(X,\Osh_X(D))$ and we will denote its dimension by $h^i(D)$.
Also, we will denote by $\equiv$ the numerical equivalence between divisors, by $[D]$ the class of $D$ in $N^1(X)=\Pic(X)\otimes \rr/\equiv$ and by $|D|$ the complete linear series associated to $D$. Observe that $N^1(X)=\Pic(X)\otimes \rr$ if $q(X)=0$, in particular this is true if $X$ is a rational surface.
We recall that the {\em effective cone} of an algebraic surface $X$ is defined as:
\[
\Eff(X) := \{\sum_i a_i[D_i] : D_i\text{ is an effective divisor}, a_i\in \rr_{\geq 0}\}.
\]
The {\em closed light cone} of $X$ is the cone of classes with non-negative self-intersection:
\[
L(X):=\{[D]\in N^1(X) : D^2 \geq 0\}.
\]
We define $L_a(X)$ to be the half-cone of $L(X)$ which contains an ample class.
In what follows we will say that a cone of $N^1(X)$ is {\em polyhedral} if it is generated by finitely many vectors. In particular a polyhedral cone is closed.
We start proving the following (see also~\cite[\S1]{n1}).
\begin{proposition}\label{eff-rays}
Let $X$ be a smooth projective surface such that $\rho(X)\geq 3$ and $\Eff(X)$ is  polyhedral. Then
\[
\Eff(X) = \sum_{[E]\in\Exc(X)} \rr_+ \cdot [E]
\]
where $\Exc(X)$ is the set of classes of integral curves $E$ of $X$ with $E^2<0$.  
\end{proposition}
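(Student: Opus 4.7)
The plan is to show that every extremal ray of the polyhedral cone $\Eff(X)$ is generated by the class of an integral curve with negative self-intersection; since a closed polyhedral cone equals the conic hull of its extremal rays and each class in $\Exc(X)$ is effective, this gives both inclusions of the claimed equality.

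First I would check that every extremal ray $R$ of $\Eff(X)$ contains the class of an integral curve. Writing any effective $D$ with $[D]\in R$ as $D=\sum n_i C_i$ with $C_i$ distinct integral curves and $n_i>0$, extremality forces each $[C_i]$ to lie on $R$; since $C_i$ has positive intersection with any ample divisor, $[C_i]\neq 0$ and hence generates $R$. It then suffices to show that for such an integral curve $E$ one has $E^2<0$. The case $E^2>0$ is immediate: $E$ is then big (by asymptotic Riemann--Roch on a surface), so $[E]$ lies in the interior of $\Eff(X)$ and cannot span an extremal ray.

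The main step, where the hypothesis $\rho(X)\geq 3$ enters, is to exclude $E^2=0$. Suppose for contradiction that $E$ is integral with $E^2=0$ and $[E]$ spans an extremal ray. Then $E\cdot E=0$ and $E\cdot C\geq 0$ for every integral curve $C\neq E$, so $E$ is nef. Polyhedral duality under the intersection pairing identifies $\Nef(X)=\Eff(X)^\vee$ as a full-dimensional polyhedral cone and matches the extremal ray $\rr_{\geq 0}[E]$ with a facet
\[
F=\{v\in\Nef(X):v\cdot E=0\}
\]
of $\Nef(X)$, of dimension $\rho(X)-1$. Since any nef class $v$ satisfies $v^2\geq 0$ (using $(v+tA)^2>0$ for an ample $A$ and $t>0$, passing to the limit $t\to 0^+$), we have $\Nef(X)\subseteq \overline{L_a(X)}$. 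A reverse Cauchy--Schwarz argument for the Lorentzian intersection form then shows that two elements of the closed forward light cone with vanishing pairing must be proportional and both isotropic; applied with $[E]$, this forces $F\subseteq \rr_{\geq 0}[E]$, making $F$ one-dimensional and contradicting $\dim F=\rho(X)-1\geq 2$.

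The main obstacle is precisely this last step, and it is essential that $\rho(X)\geq 3$: when $\rho(X)=2$ the light cone is already polyhedral, with both of its boundary rays isotropic, as in $\pp^1\times\pp^1$ whose effective cone has two extremal rays generated by fiber classes with self-intersection zero.
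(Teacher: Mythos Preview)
Your argument is correct. The paper's proof uses the same underlying tool (the Lorentzian signature coming from the Hodge index theorem) but proceeds on the primal side rather than the dual one: it observes that the interior of $L_a(X)$ lies in $\Eff(X)$ by Riemann--Roch, hence $L_a(X)\subset\Eff(X)$ since a polyhedral cone is closed, and then notes that for $\rho(X)\geq 3$ the boundary $\partial L_a(X)$ is ``round''; a polyhedral cone containing $L_a(X)$ therefore cannot have an extremal ray sitting on $\partial L_a(X)$. You instead dualize: $\Nef(X)=\Eff(X)^\vee\subset L_a(X)$, and an extremal ray $\rr_{\geq 0}[E]$ of $\Eff(X)$ corresponds to a facet of $\Nef(X)$ of dimension $\rho(X)-1$, which the Lorentzian inequality collapses to the line $\rr[E]$ when $E^2=0$. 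Both routes are short; the paper's is a one-line geometric observation but leaves implicit the reduction to integral curves on extremal rays, which you spell out, while your facet dimension count makes the role of the hypothesis $\rho(X)\geq 3$ completely transparent.
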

\begin{proof}
This is a consequence of the following observation:  by Riemann-Roch theorem the interior of  $L_a(X)$ is contained in $ \Eff(X)$. Since the effective cone is polyhedral, then it is closed, so that $L_a(X)\subset \Eff(X)$.
Since $\rho=\rho(X)\geq 3$, then $\partial L_a(X)$ is circular because the intersection form is hyperbolic with signature $(1,\rho-1)$ by the Hodge index theorem.
Thus an element of $\partial {L_a}(X)$ can not be an extremal ray of $\Eff(X)$, since otherwise $\Eff(X)$ would not be polyhedral in a neighbourhood of that ray, giving a contradiction.
\end{proof}

Another important cone associated to $X$ is the cone of numerically effective divisors, or simply the {\em nef cone}:
\[
\Nef(X) := \{[D]\in N^1(X) : D\cdot E\geq 0\text{ for any }[E]\in\Eff(X)\}.
\] 
This cone is the dual of the effective cone with respect to the intersection form on the surface $X$.  
Finally, if $q(X)=0$, we define the {\em semiample cone} $\SAmple(X)$ to be the cone spanned by the classes of semiample divisors, where $D$ is semiample if $|nD|$ is base point free for some $n>0$ (see \cite[Def. 1.1.10, 2.1.26]{laz}).
\begin{proposition}\label{eff}
Let $X$ be a smooth projective surface with $q(X)=0$. We have the following inclusions:
\[
\SAmple(X)\subset\Nef(X)\subset\overline{\Eff(X)}.
\]\end{proposition}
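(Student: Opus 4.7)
The statement splits into two separate inclusions, which I would treat in turn.

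\textbf{Plan for the first inclusion} $\SAmple(X)\subset\Nef(X)$. Let $[D]\in\SAmple(X)$ come from a semiample integer divisor $D$, so that $|nD|$ is base-point free for some $n>0$. Given any irreducible curve $E\subset X$, base-point freeness lets me pick a member $D'\in|nD|$ whose support does not contain $E$; then $D'\cdot E\ge 0$ since the intersection is the number of points of $D'\cap E$ counted with multiplicity, all nonnegative. Hence $D\cdot E=\tfrac1n D'\cdot E\ge 0$ for every irreducible $E$, and by $\rr_{\ge 0}$-linearity $D\cdot C\ge 0$ for every class $C\in\Eff(X)$, i.e.\ $[D]\in\Nef(X)$. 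Extending by $\rr_{\ge 0}$-linearity to arbitrary elements of $\SAmple(X)$ gives the inclusion.

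\textbf{Plan for the second inclusion} $\Nef(X)\subset\overline{\Eff(X)}$. I would first verify that any rational ample class lies in $\Eff(X)$: if $A\in\Pic(X)_\rr$ is rational and ample, a positive integer multiple $mA$ is represented by a very ample integer divisor, hence by an effective one, so $A=\tfrac1m(mA)\in\Eff(X)$. Now fix $D\in\Nef(X)$ and an integer ample class $H$. For every $\epsilon>0$ the class $D+\epsilon H$ is ample (nef plus ample is ample, by Kleiman's criterion applied in $\Pic(X)_\rr$). Approximating $D+\epsilon H$ by rational ample classes shows $D+\epsilon H\in\overline{\Eff(X)}$, and letting $\epsilon\to 0$ yields $D\in\overline{\Eff(X)}$.

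\textbf{Expected difficulty.} Both inclusions are essentially formal once the definitions are unwound; the only genuinely delicate point is the mild care needed to pass between integer, rational, and real classes in $\Pic(X)_\rr$, in particular to realize an ample real class as a limit of classes that are actually known to be effective. Everything else reduces to standard facts (base-point freeness forces nonnegative intersection with curves, and nef $+$ ample $=$ ample) so I do not expect any real obstacle beyond keeping the $\rr_{\ge 0}$ vs.\ $\qq_{\ge 0}$ bookkeeping straight.
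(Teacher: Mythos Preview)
Your proof is correct and follows essentially the same route as the paper. For the first inclusion the paper phrases the argument via the morphism $\varphi_{|nD|}$ and the nonnegativity of $\deg(\varphi_{|nD|}^*\mathcal{O}_{\pp^r}(1)|_E)$, which is equivalent to your ``choose a member of $|nD|$ missing $E$'' argument; for the second inclusion the paper simply cites Lazarsfeld's result that $\Nef(X)$ is the closure of the ample cone, whereas you spell out the underlying ``nef $+\ \epsilon\cdot$ample is ample, and rational ample classes are effective'' argument directly---so your treatment is just a more self-contained version of the same idea.
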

\begin{proof}
The second inclusion is due to the fact that the nef cone is the closure of the ample cone~\cite[Thm.~1.4.23]{laz}. For the first inclusion observe that, if $D$ is semiample, then $\varphi_{|nD|}: X\to\pp^r$ is a morphism for $n$ big enough. Thus, if $E$ is an effective divisor, then $nD\cdot E=\deg (\varphi_{|nD|}^*\Osh_{\pp^r}(1)_{|E})\geq 0$, so that $D$ is nef.
\end{proof}

The following will be useful in the next sections.
\begin{proposition}\label{fib}
Let $X$ be a smooth projective surface with $q(X)=0$ and let $M$ be a non-trivial effective 
divisor of $X$ such that $|M|$ does not contain fixed components and
$M^2=0$. Then $M\sim aD$ with $D$ smooth and integral, $h^0(D)=2$ and
$H^0(M)\cong\Sym^a H^0(D)$.
\end{proposition}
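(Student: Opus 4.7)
The plan is to identify $|M|$ with a morphism to $\pp^1$ using the fact that $M^2=0$ combined with $|M|$ having no fixed components, and then use $q(X)=0$ to identify the base of the associated fibration as $\pp^1$.

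First I would argue that $|M|$ is in fact base-point free. Since $|M|$ has no fixed components, its base locus is at most a finite set of points. If $p$ were a base point, then two general members $M_1,M_2\in|M|$ would intersect at $p$, contributing a positive amount to $M_1\cdot M_2=M^2=0$, a contradiction. Hence $\varphi:=\varphi_{|M|}\colon X\to\pp^r$ is a morphism, and since $M^2=0$ its image is a curve.

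Next I would take the Stein factorization $\varphi=g\circ f$, where $f\colon X\to B$ is a surjection with connected fibers onto a smooth projective curve $B$ and $g\colon B\to\varphi(X)$ is finite. Since there is a surjection $X\twoheadrightarrow B$ and $q(X)=0$, pullback of $1$-forms yields $h^0(\Omega_B^1)\leq q(X)=0$, hence $B\cong\pp^1$. Then $M\sim\varphi^*H$ for a hyperplane section $H$ on $\varphi(X)$, so $M\sim f^*(g^*H)$; writing $g^*H\sim aP$ on $\pp^1$ for any point $P$ and setting $D:=f^*P$ gives $M\sim aD$.

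For a general $P\in\pp^1$, the fiber $D=f^*P$ is smooth by generic smoothness in characteristic zero, and integral because $f$ has connected fibers. Finally, since $f$ has connected fibers, $f_*\Osh_X=\Osh_{\pp^1}$, so the projection formula gives
\[
H^0(X,\Osh_X(aD))=H^0(X,f^*\Osh_{\pp^1}(a))=H^0(\pp^1,\Osh_{\pp^1}(a)),
\]
and in particular $h^0(D)=h^0(\pp^1,\Osh_{\pp^1}(1))=2$. The natural identification $H^0(\pp^1,\Osh_{\pp^1}(a))\cong\Sym^a H^0(\pp^1,\Osh_{\pp^1}(1))$ then yields $H^0(M)\cong\Sym^a H^0(D)$, completing the proof.

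The only step that requires any thought is the base-point-freeness of $|M|$, which I would not expect to be serious; everything else is a direct application of Stein factorization plus the fact that $q(X)=0$ forces the base of the fibration to be rational.
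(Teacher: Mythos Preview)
Your proof is correct and follows the same overall strategy as the paper: deduce base-point-freeness from $M^2=0$, take the Stein factorization, and use $q(X)=0$ to identify the base with $\pp^1$. The only difference is that you compute $h^0(D)=2$ and the $\Sym^a$ identification directly via the projection formula and $f_*\Osh_X=\Osh_{\pp^1}$, whereas the paper argues by comparing $h^0(aD)\geq a+1$ with the degree of the image curve in $\pp^n$ to force $n=a$ and recognize $\nu$ as the $a$-th Veronese embedding; your route is slightly slicker here.
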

\begin{proof}
The linear series $|M|$ is base point free, since otherwise two of its
distinct elements would intersect at the base points giving $M^2>0$,
which is a contradiction.

Let $\varphi_{|M|}: X\to B\subset \pp^n$ be the morphism defined by
$|M|$. Since holomorphic 1-forms of $B$ pull-back to $X$ we have
$q(X)\geq p_a(B)$. Thus $q(X)=0$ implies that  $B$ is smooth and
rational. Consider the Stein factorization of $\varphi_{|M|}$:
\[
\xymatrix@1{
X\ar[rr]^-{\varphi_{|M|}}\ar[rd]_-{f} & & \pp^n\\
& \pp^1\ar[ur]_{\nu} &
}
\]
where $f$ is a morphism with connected fibers and $\nu$ is a finite map.
If $a:=\deg(\nu)\deg(\nu(\pp^1))$, then $M\sim aD$, where
$\Osh_X(D)=f^*\Osh_{\pp^1}(1)$. Since $h^0(D)\geq 2$, then
$n+1=h^0(M)=h^0(aD)\geq a+1$. On the other hand
$a\geq\deg(\nu(\pp^1))\geq n$ since the curve $\nu(\pp^1)$ is
non-degenerate.
Thus $n=a$, $h^0(D)=2$ and the map $\nu$ is the $a$-Veronese embedding
of $\pp^1$.
Since $f$ has connected fibers, then $D$ is connected so that, by
Bertini's second theorem~\cite{is}, the general element of $|D|$ is
smooth.
\end{proof}

\section{The structure of the effective cone}

Let $X$ be a projective surface with $q(X)=0$ and $\kappa(-K_X)\geq 0$. Observe that in this case, either $K_X$ is numerically trivial, or $X$ is rational by Castelnuovo's rationality criterion~\cite[Thm.~3.4, VI]{bpv}.
We consider the problem of determining under which hypothesis the effective cone of $X$ is rational polyhedral. 

Let $L_a(X)$ be the component of the closed light cone which contains the ample cone, as in the previous section,  and let $E(X)$ be the convex cone generated by the classes of curves in $X$ with negative self-intersection.
Given a cone $\sigma\subseteq N^1(X)$ we will adopt the following notation:
\[
\sigma_{\geq 0} = \{[D]\in\sigma :D\cdot K_X\geq 0\},
\qquad
\sigma_{\leq 0} = \{[D]\in\sigma :D\cdot K_X\leq 0\}.
\]
The cones $\sigma_{> 0}$ and $\sigma_{< 0}$ are defined in a similar way.  
\begin{theorem}\label{effc}
If $X$ is a smooth rational surface with $\kappa(-K_X)\geq 0$ and $\rho(X)\geq 3$, then
\[
\overline{\Eff(X)}=\overline{E(X)}.
\]
\end{theorem}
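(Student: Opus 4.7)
The easy inclusion $\overline{E(X)}\subseteq\overline{\Eff(X)}$ is immediate, since every negative integral curve is an effective divisor. For the reverse inclusion, my plan begins with the following decomposition: every integral curve $C$ on $X$ has either $C^2<0$ (so $[C]\in E(X)$) or $C^2\geq 0$, in which case $C$ is automatically nef (its intersection with every integral $C'\neq C$ is non-negative, and its self-intersection is non-negative by assumption), so $[C]\in L_a(X)$. Hence $\overline{\Eff(X)}\subseteq \overline{E(X)+L_a(X)}$, and the theorem reduces to the statement
\[
L_a(X)\subseteq \overline{E(X)}.
\]

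The next reduction uses the hypothesis $\rho(X)\geq 3$: as in the proof of Proposition \ref{eff-rays}, the intersection form has Lorentzian signature $(1,\rho-1)$ with $\rho-1\geq 2$, so $\partial L_a(X)$ is a circular cone and $L_a(X)$ is the convex hull of its boundary. It therefore suffices to show $\partial L_a(X)\subseteq \overline{E(X)}$, and by density of rational isotropic classes in the null cone of a rational hyperbolic form of rank $\geq 3$, it is enough to treat a rational $v\in \partial L_a(X)$.

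For such a $v$, fix $m>0$ with $mv\in \Pic(X)$. When $v\cdot K_X\leq 0$, Riemann--Roch gives $\chi(mv)=1-\tfrac{m}{2}v\cdot K_X\geq 1$, while $h^2(mv)=h^0(K_X-mv)=0$ for $m$ large (the ample degree of $K_X-mv$ is negative), so $mv$ is effective. Since $(mv)^2=0$, Zariski decomposition of this isotropic effective class forces its negative part to vanish, so $mv$ is nef. Proposition \ref{fib} then identifies $mv$ as a positive multiple of the general fibre class $D$ of a morphism $\varphi_{|mv|}\colon X\to \mathbb{P}^1$. Because $\rho(X)\geq 3$, $X$ is not a minimal ruled surface (nor a relatively minimal elliptic fibration over $\mathbb{P}^1$), so $\varphi_{|mv|}$ admits at least one reducible fibre; the integral components of any singular fibre of a $\mathbb{P}^1$- or elliptic fibration on a rational surface all have negative self-intersection, so $[D]\in E(X)$ and hence $v\in \overline{E(X)}$.

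The main obstacle I expect is the remaining subcase $v\cdot K_X>0$, where Riemann--Roch does not directly yield effectiveness. Here I would use the hypothesis $\kappa(-K_X)\geq 0$ in its full strength: fix an effective $A\sim -nK_X$ with $n>0$, and note that $v\cdot A=-n\,v\cdot K_X<0$ forces some integral component $C$ of $A$ to satisfy $v\cdot C<0$. By the Lorentzian geometry of the positive light cone (two nonzero elements of $L_a(X)$ pair non-negatively), this forces $C^2<0$, so $C$ is a negative curve. The plan is to reflect $v$ across $C^\perp$, producing the isotropic class $v':=v-(2v\cdot C/C^2)C$ with $v-v'$ a positive multiple of $C$, hence an element of $E(X)$; after verifying that $v'$ lies in the positive component of the light cone, one iterates this modification using the finitely many negative components of $A$ until the already-treated subcase $v'\cdot K_X\leq 0$ is reached, at which point $v\in v'+E(X)\subseteq \overline{E(X)}$.
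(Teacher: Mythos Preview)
Your reduction to showing $L_a(X)\subseteq\overline{E(X)}$ and further to rational isotropic classes is sound, and your treatment of the strict case $v\cdot K_X<0$ is essentially correct: the genus formula forces the general fibre to be $\pp^1$, and a $\pp^1$-fibration on a surface with $\rho\geq 3$ must have a reducible fibre, whose components lie in $E(X)$.  But two genuine gaps remain.

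\textbf{The boundary case $v\cdot K_X=0$.}  Your parenthetical claim that $\rho(X)\geq 3$ excludes relatively minimal elliptic fibrations is false.  Take $X$ to be the blow-up of $\pp^2$ at nine points in general position (cf.\ Remark~\ref{k2}): then $\rho(X)=10$, $\varphi_{|-K_X|}$ is a relatively minimal elliptic fibration, and \emph{every} fibre is irreducible.  Hence the isotropic class $[-K_X]$ is not a sum of negative curves, so your argument does not place it in $E(X)$.  It does lie in $\overline{E(X)}$, but only as a limit of the infinitely many $(-1)$-curve classes accumulating on $K_X^\perp$; establishing this requires an argument you have not supplied (and is exactly what the Cone Theorem provides in the paper's Step~3).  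There is also a smaller gap here: when $v\cdot K_X=0$ you only get $h^0(mv)\geq 1$ from Riemann--Roch, which is not enough to invoke Proposition~\ref{fib}.

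\textbf{The case $v\cdot K_X>0$.}  The reflection procedure has no termination argument.  After reflecting in a component $C$ of $A\in|-nK_X|$ you have no control over the sign of $C\cdot K_X$, so $v'\cdot K_X$ need not decrease; if $C$ is a $(-2)$-curve it is unchanged.  The components of $A$ can generate an infinite reflection group (for instance when they form an affine Dynkin configuration), and the orbit of $v$ may never enter the half-space $K_X^{\perp}\cup\{K_X\cdot\,\leq 0\}$.  The paper circumvents this entirely: since only finitely many integral negative curves satisfy $C\cdot K_X>0$ (they are among the components of $A$), the cone $E(X)_{>0}$ is finitely generated, and an isotropic extremal ray of $\overline{\Eff(X)}$ in this region would be forced to coincide with one of these finitely many negative classes---a contradiction.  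Combined with the Cone Theorem on the $K_X$-negative side, this yields the result without any reflection or fibre-decomposition argument.
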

\begin{proof}

We divide the proof in three steps.
\vspace{0.1cm}

\noindent {\em Step 1.} We prove that 
\[
\chull(\overline{E(X)},L_a(X))=\overline{\Eff(X)}.
\]
If the sets are distinct then, since both are closed, there exists a  class $[D]\in \Eff(X)\backslash \chull(\overline {E(X)},L_a(X))$. In particular $D^2<0$, so that $|D|$ contains at least a negative curve as a fixed component.
Thus $D\sim D_1+D_2$, where $D_1, D_2$ are effective and $D_1$ consists of all the negative curves contained in the fixed part of $|D|$. Since $[D_1]\in E(X)$, then $[D_2]\not \in \chull(\overline{E(X)},L_a(X))$, so that  $D_2^2<0$. Then there is still a negative curve in the fixed part of $|D_2|$ and thus in the fixed part of $|D|$, giving a contradiction.
\vspace{0.1cm}

\noindent {\em Step 2.} 
We now prove that
\[
L_a(X)_{\geq 0}\subset\chull(\overline{E(X)},L_a(X)_{\leq 0}).
\]
If the interior of $L_a(X)_{\geq 0}$ is empty, then $L_a(X) = L_a(X)_{\leq 0}$, so we get the claim by Step 1.
Otherwise, let $[D]$ be a class in the interior of $L_a(X)_{\geq 0}$, i.e. $D^2>0$ and $D\cdot K_X>0$. 
Observe that for some positive integers $n,m$, the multiples $nD$, $-mK_X$ are effective since $D^2>0$ and $\kappa(-K_X)\geq 0$.
Thus, since $nD\cdot (-mK_X)<0$, then $|nD|$ contains at least a negative curve in its fixed locus. Let $D_1$ be given by all curves with negative self-intersection in the fixed locus of $|nD|$.  Then the divisor $D_2=D-D_1$ is nef, so that $D_2^2\geq 0$ and $D_2\cdot (-K_X)\geq 0$.
Thus $D_2\in L_a(X)_{\leq 0}$.
Together with Step 1, this gives:
\[
\chull(\overline{E(X)},L_a(X)_{\leq 0})=\overline{\Eff(X)}.
\]

\noindent {\em Step 3.} 
Let $l_+\in\partial L_a(X)_{>0}$,  so that $l_+^2=0$ and $l_+\cdot K_X>0$. By Step 2 we have $l_+=e+l_-$, where $e\in\overline{E(X)}$ and $l_-\in L_a(X)_{\leq 0}$. Assume that $l_+$ is an extremal ray of $\overline{\Eff(X)}$, then  $l_+=e$ is an extremal ray of $\overline{E(X)}$. Observe that $\overline{E(X)}_{>0}$ and $E(X)_{>0}$ have the same extremal rays, since the last convex set contains a finite number of extremal rays, which are classes of curves contained in the base locus of an effective multiple of $-K_X$. Then $l_+$ is an extremal ray of  $E(X)_{>0}$, so that $l_+^2<0$, which is a contradiction.

Assume now that $z_-\in\partial L_a(X)_{<0}$, so that $z_-^2=0$ and $z_-\cdot K_X=-\epsilon<0$.  Since $X$ is rational and $\rho(X)\geq 3$, then by the Cone Theorem~\cite[Thm.~1.5.33]{laz} and \cite[Lemma~6.2]{deb} 
we have:
\[
\overline{\Eff(X)}=\overline{\Eff(X)}_{\geq 0}+\sum_i \rr_+\cdot [D_i],
\]
for countably many $[D_i]\in E(X)$ which can have accumulation points only on the hyperplane $K_X^{\perp}$. This implies that $z_-=z_++e'$, where $z_+\in \overline{\Eff(X)}_{\geq 0}$ and  $e'\in \overline{E(X)}_{\leq 0}$.
If $z_-$ is an extremal ray of $\overline{\Eff(X)}$, then  $z_-=e'$ is an extremal ray of $\overline{E(X)}_{<-\epsilon/2}=E(X)_{<-\epsilon/2}$ by the Cone theorem. Then, since $z_-$ is an extremal ray, we get $z_-^2<0$, which is a contradiction.

We proved that neither $\partial L_a(X)_{<0}$ nor $\partial L_a(X)_{>0}$ can be at the boundary of the effective cone, thus $L_a(X)\subset\overline{E(X)}$.
\end{proof}

\begin{corollary}\label{effneg}
Let $X$ be a smooth rational surface with $\kappa(-K_X)\geq 0$, then the following are equivalent
\begin{enumerate}
\item $\Eff(X)$ is rational polyhedral,
\item $X$ contains finitely many $(-1)$ and $(-2)$-curves.
\end{enumerate}
\end{corollary}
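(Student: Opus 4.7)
My plan is to prove both implications after first establishing a cleaner reformulation of (ii): on a rational surface $X$ with $\kappa(-K_X)\geq 0$, condition (ii) is equivalent to $X$ having only finitely many integral curves with negative self-intersection. Indeed, fix $m>0$ with $-mK_X$ effective; only finitely many integral curves appear as components of a divisor in $|-mK_X|$. For any other integral curve $C$ one has $(-K_X)\cdot C\geq 0$, i.e.\ $K_X\cdot C\leq 0$. Combining with adjunction $C^2+K_X\cdot C=2p_a(C)-2$ and $C^2<0$ one concludes $p_a(C)=0$ and $C^2\in\{-1,-2\}$, so every negative curve not in the fixed set is a $(-1)$- or $(-2)$-curve.

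For (i)$\Rightarrow$(ii) I would argue that each integral curve $C$ with $C^2<0$ spans an extremal ray of $\Eff(X)$: if $[C]=v_1+v_2$ with $v_i\in\Eff(X)$, then $C\cdot v_i<0$ for some $i$, forcing $C$ to be a component of any effective representative of $v_i$ and, by iteration on the remaining part, $v_1,v_2$ to lie in $\rr_{\geq 0}[C]$. Since $h^0(C)=1$ whenever $C^2<0$, distinct integral curves with $C^2<0$ yield distinct extremal rays. A rational polyhedral cone has only finitely many extremal rays, so $X$ has only finitely many integral curves with negative self-intersection, and in particular only finitely many $(-1)$- and $(-2)$-curves.

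For (ii)$\Rightarrow$(i) the preliminary reduction shows that $E(X)$ is generated by finitely many rational classes, hence is a closed rational polyhedral cone. If $\rho(X)\geq 3$, Theorem \ref{effc} gives $\overline{\Eff(X)}=\overline{E(X)}=E(X)$, so $\Eff(X)$ is rational polyhedral. If $\rho(X)\leq 2$, rationality of $X$ forces $X\cong\pp^2$ or $X\cong\ff_n$ for some $n\geq 0$, and in either case $\Eff(X)$ is generated by an explicit finite set of rational classes.

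The substantive step is the preliminary reduction: once that is in place, both directions follow from standard facts about extremal rays of integral negative curves together with Theorem \ref{effc} (for the nontrivial range $\rho(X)\geq 3$). The reduction itself is just a careful application of adjunction combined with effectivity of $-mK_X$, so it should present no real difficulty.
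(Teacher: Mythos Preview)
Your proof is correct and follows the same overall strategy as the paper: reduce to showing that the set of integral curves with negative self-intersection is finite, invoke Theorem~\ref{effc} for $\rho(X)\geq 3$, and handle $\rho(X)\leq 2$ separately. The one noteworthy difference is in the reduction step. The paper splits negative curves according to the sign of $K_X\cdot C$ and, for the case $K_X\cdot C<0$, appeals to the Cone Theorem together with \cite[Lemma~6.2]{deb} to conclude that such curves are rational, hence $(-1)$-curves by adjunction. Your argument is more elementary: once $C$ is not a component of a fixed effective divisor $D\in|-mK_X|$ you have $K_X\cdot C\leq 0$, and then adjunction alone gives $2p_a(C)-2=C^2+K_X\cdot C\leq -1$, forcing $p_a(C)=0$ and $C^2\in\{-1,-2\}$. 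This bypasses the Cone Theorem entirely. One small wording issue: ``components of a divisor in $|-mK_X|$'' should be read as the components of one \emph{fixed} member of $|-mK_X|$, since if $h^0(-mK_X)\geq 2$ the union over all members is not finite; your argument only needs a single effective representative.
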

\begin{proof}
If $\rho(X)\leq 2$,  then $X$ is a toric surface, either the projective plane or a Hirzebruch surface, so that both the conditions are obviously satisfied. So now we assume that $\rho(X)\geq 3$.
The implication $(i)\Rightarrow (ii)$  is given by Proposition~\ref{eff-rays}, since the classes of $(-1)$ and $(-2)$-curves span extremal rays of the effective cone.
To prove the converse, by Theorem \ref{effc} it is enough to prove that $E(X)$ is rational polyhedral, or equivalently that $X$ contains finitely many classes of integral curves with negative self-intersection. The set of such classes in ${E(X)}_{>0}$ is finite, since the corresponding curves belong to the base locus of an effective multiple of $-K_X$.
By the Cone Theorem and \cite[Lemma~6.2]{deb} the curves with negative self-intersection with classes in ${E(X)}_{<0}$ are rational, thus they are $(-1)$-curves by adjunction formula. 
Finally, a curve with negative self-intersection and orthogonal to $K_X$ is a $(-2)$-curve. 
Thus we conclude by observing that $E(X)\subseteq\Eff(X)\subseteq\overline{\Eff(X)}=\overline{E(X)}=E(X)$.
\end{proof}

\begin{remark}\label{k2}
If $\kappa(-K_X)=2$, then $\Eff(X)$ is rational polyhedral~\cite[Prop.~3.3]{na}.
The effective cone of a smooth rational surface $X$ with  $\kappa(-K_X)=1$ is not necessarily polyhedral. Consider as an example the blow-up $X$ of $\pp^2$ at the nine intersection points $\{p_1,\dots,p_9\}=C_1\cap C_2$ of two general plane cubics. Due to the generality assumption on the $C_i$'s, there are no reducible elements in the linear series $|-K_X|$, so that all the fibers of $\varphi_{|-K_X|}: X\to\pp^1$ are integral. 
Hence the class $-K_X$ is an extremal ray of the effective cone. By Proposition~\ref{eff-rays} and the fact that $K_X^2=0$, we deduce that $\Eff(X)$ is not polyhedral (see also~\cite[Cor. 3.2]{to1}).
\end{remark}

\section{The nef and the semiample cones} 
In what follows we will make use of the {\em Zariski decomposition} of a pseudoeffective divisor (see~\cite[Thm. 2.3.19]{laz}):  a pseudoeffective divisor $D$ can be written uniquely as a sum $D=N+P$, where $N$ and $P$ are $\qq$-divisors such that $P$ is nef, $N$ is effective and the intersection matrix of its components is negative definite, $P$ is orthogonal to each component of $N$. The divisors $P$ and $N$ are called the positive and negative part of $D$ respectively.

In what follows, we will denote by $\kappa(D)$ the Iitaka dimension of a divisor $D$ (see  \cite[Def.~2.1.3]{laz}).
\begin{lemma}\label{genere}
Let $X$ be a smooth rational surface with $\kappa(-K_X)\geq 1$. 
If $E$ is an effective divisor of $X$ such that the intersection matrix on its integral components is negative definite, then $p_a(E)\leq 0$.
\end{lemma}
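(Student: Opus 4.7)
The plan is to argue by contradiction: assume $p_a(E) \geq 1$ and derive a contradiction from $\kappa(-K_X) \geq 1$. First, on the rational surface $X$ we have $h^0(K_X) = h^1(K_X) = 0$, so the adjunction sequence
\[
0 \to \Osh_X(K_X) \to \Osh_X(K_X+E) \to \Osh_E(K_X+E) \to 0
\]
gives $h^0(K_X+E) = h^0(\Osh_E(K_X+E)) = h^1(\Osh_E) \geq p_a(E) \geq 1$, where the last inequality uses $h^1(\Osh_E) = p_a(E) + h^0(\Osh_E) - 1$. Thus $K_X+E$ is effective: choose $D' \geq 0$ with $D' \sim K_X+E$, equivalently $-K_X \sim E - D'$.

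Second, I transfer $\kappa(-K_X) \geq 1$ to the linear system $|mE|$. By the definition of Iitaka dimension there exists $m \geq 1$ with $h^0(-mK_X) \geq 2$. For each $A \in |{-mK_X}|$ the effective divisor $A+mD'$ is linearly equivalent to $mE$, and the assignment $A \mapsto A+mD'$ is injective; hence $h^0(mE) \geq h^0(-mK_X) \geq 2$, and in particular $\dim|mE| \geq 1$.

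Third, and this is the heart of the proof, I confine the mobile part of $|mE|$ to a negative definite subspace. Let $|mE| = |M'| + F'$ be the mobile-fixed decomposition. Since $mE$ is itself a member of $|mE|$, every fixed component of $|mE|$ must be a component of $E$; hence $[F']$ lies in the $\qq$-subspace $V \subset \Pic(X)_{\qq}$ spanned by the classes $[C_1],\dots,[C_n]$ of the integral components of $E$. Since $[mE]\in V$ as well, the class $[M'] = [mE] - [F']$ also lies in $V$. On the one hand, the mobile part of any linear system on a surface is nef (its base locus has codimension $\geq 2$, so for any irreducible curve one can find a member of $|M'|$ avoiding it), hence $(M')^2 \geq 0$. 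On the other hand, the intersection form restricted to $V$ is negative definite by hypothesis, forcing $(M')^2 \leq 0$ with equality only when $[M']=0$. Therefore $[M']=0$, contradicting $\dim|M'| = \dim|mE| \geq 1$.

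The main obstacle to spot is the observation that the fixed part of $|mE|$ is forced onto the components of $E$ (because $mE$ itself sits in the system), keeping the whole argument inside the negative definite subspace $V$; once this is in hand, the contradiction is the familiar tension between nefness and negative definiteness.
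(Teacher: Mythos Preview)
Your proof is correct and follows essentially the same logic as the paper's, just organized as a contrapositive. The paper asserts directly that $\kappa(E)=0$ from negative definiteness, then observes that $K_X+E$ effective would force $\kappa(E)=\kappa(-K_X+(K_X+E))\geq 1$; you unpack the claim $\kappa(E)=0$ explicitly via the mobile/fixed decomposition of $|mE|$, which is the standard argument behind that assertion. The cohomological step is also the same up to Serre duality: the paper uses the sequence for $\Osh_X(-E)$ to get $h^1(\Osh_E)=h^0(K_X+E)$, while you use the twisted sequence for $\Osh_X(K_X)$ together with adjunction $\omega_E\cong\Osh_E(K_X+E)$ to reach the identical conclusion.
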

\begin{proof}
We begin proving that $h^0(K_X+E)=0$. Observe that $\kappa(E)=0$ since the intersection form on its components is negative definite.
If $Z=K_X+E$ is an effective divisor, then $0=\kappa(E)=\kappa(-K_X+Z)\geq 1$, which is a contradiction.

Consider now the exact sequence of sheaves:
\[
\xymatrix@1{
0\ar[r] & \Osh_X(-E)\ar[r] & \Osh_X\ar[r] & \Osh_E\ar[r] & 0.
}
\]
Taking cohomology and using the fact that $h^1(\Osh_X)=h^2(\Osh_X)=0$ because $X$ is rational, we get $h^1(\Osh_E)=h^2(-E)$. The last is equal to $h^0(K_X+E)$ by the Serre's duality theorem. Now, from what proved before, we deduce that $h^1(\Osh_E)=0$ so that $p_a(E)\leq 0$, which proves the claim.
\end{proof}

\begin{lemma}\label{nef+big}
Let $X$ be a smooth rational surface with $\kappa(-K_X)\geq 1$ and let $L$ be a nef divisor with $\kappa(L)=2$. Then $L$ is semiample.
\end{lemma}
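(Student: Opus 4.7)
The plan is to identify the null locus of $L$ and contract it, reducing the question of semiampleness to that of ampleness on a normal surface. Since $\kappa(L)=2$ on a surface, $L$ is big, so $L^2>0$. Let $\mathcal{C}$ denote the set of integral curves $C\subset X$ with $L\cdot C=0$. For any $C\in\mathcal{C}$ the Hodge index theorem applied to $[C]\in L^\perp\setminus\{0\}$ forces $C^2<0$, and then Lemma~\ref{genere} applied to the single-component effective divisor $E=C$ yields $p_a(C)\leq 0$; hence $C\cong \pp^1$ is smooth rational.

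I would next show that $\mathcal{C}$ is finite and can be contracted. Distinct curves in $\mathcal{C}$ have distinct classes in $\Pic(X)$: two integral curves in the same class would be linearly equivalent (using $q(X)=0$), forcing the class to move and hence to have nonnegative self-intersection, contradicting $C^2<0$. By Nakamaye's theorem, the null locus $\bigcup_{C\in\mathcal{C}} C$ coincides with the augmented base locus $\mathbf{B}_+(L)$, which is Zariski closed; so $\mathcal{C}=\{C_1,\dots,C_k\}$ is finite. Its intersection matrix is negative definite by Hodge index inside $L^\perp$, so by Artin's contractibility criterion there is a proper birational morphism $\pi\colon X\to Y$ onto a normal projective surface that contracts precisely the curves of $\mathcal{C}$. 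Since $L\cdot C_i=0$ for all $i$ and $q(X)=0$, the divisor $L$ descends: there exists a $\qq$-Cartier divisor $L_Y$ on $Y$ with $\pi^{*}L_Y=L$.

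It remains to check that $L_Y$ is ample. One has $L_Y^2=L^2>0$, and for every integral curve $D\subset Y$ the proper transform $\tilde D$ is not in $\mathcal{C}$, so $L_Y\cdot D=L\cdot \tilde D>0$. The Nakai--Moishezon criterion on normal projective surfaces then gives that $L_Y$ is $\qq$-ample; some multiple $mL_Y$ is very ample and in particular base point free, so $mL=\pi^{*}(mL_Y)$ is base point free on $X$, proving that $L$ is semiample. The hard part is verifying that the null locus of $L$ is a negative-definite configuration of smooth rational curves, and this is exactly where Lemma~\ref{genere} (and through it the hypothesis $\kappa(-K_X)\geq 1$) enters; the finiteness of $\mathcal{C}$ and the existence of the descent $\pi^{*}L_Y=L$ are then standard consequences of negative-definiteness on a smooth surface with $q(X)=0$.
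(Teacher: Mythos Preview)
Your approach is the same as the paper's---contract the null locus via Artin, descend $L$, apply Nakai--Moishezon---but there is one imprecision in how you invoke Artin. To obtain a \emph{projective} contraction $\pi\colon X\to Y$ (not merely Grauert's analytic one from negative definiteness), Artin's criterion requires $p_a(Z)\leq 0$ for \emph{every} effective cycle $Z$ supported on $\mathcal{C}$, not only for each irreducible component. Knowing each $C_i\cong\pp^1$ does not by itself force this: for instance, two smooth rational $(-3)$-curves meeting with intersection number $2$ have negative definite matrix yet $p_a(C_1+C_2)=1$. The fix is immediate and is exactly what the paper does: apply Lemma~\ref{genere} to every positive cycle $Z$ on $\mathcal{C}$ (its intersection matrix is negative definite since it sits in $L^\perp$), which yields $p_a(Z)\leq 0$ and hence a rational, projective contraction. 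The paper then uses Artin's Corollary~2.6 to descend $L$ to a genuine Cartier divisor on $Y$; your $\qq$-Cartier descent also suffices for semiampleness. Your appeal to Nakamaye for the finiteness of $\mathcal{C}$ is correct but heavier than needed---writing $mL\sim A+E$ with $A$ ample and $E$ effective already forces every $C\in\mathcal{C}$ to be a component of $E$.
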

\begin{proof}
We follow the proof of~\cite[Lemma~2.6]{tvv}. Let $\Delta$ be the union of all integral curves orthogonal to $L$. Since $L^2>0$ then, by the Hodge index theorem, the restriction to $\Delta$ of the intersection form of $X$ is negative definite.
Moreover, by Lemma~\ref{genere}, we have that $p_a(E)\leq 0$ for any effective divisor supported on $\Delta$. 
Thus we can apply Artin's contractability criterion \cite[Thm. 2.3]{art} to $\Delta$. So, there exists a normal projective surface $Y$ and a birational morphism $\psi: X\to Y$ which contracts only the 
connected components of $\Delta$. Hence, by \cite[Cor. 2.6]{art}, $L$ is linearly equivalent to a divisor $L'$ whose support is disjoint from $\Delta$, and hence $L$ is the pullback of a Cartier divisor on $Y$. By the Nakai-Moishezon criterion $L'$ is ample, so that $L$ is semiample.
\end{proof}

\begin{lemma}\label{k=1}
Let $X$ be a smooth algebraic surface with $q(X)=0$ and let $L$ be a nef divisor with $\kappa(L)=1$. Then $L$ is semiample and $L^2=0$.
\end{lemma}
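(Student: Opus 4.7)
First I would dispatch the assertion $L^2=0$: since $L$ is nef, $L^2\geq 0$, and if $L^2>0$ then Riemann--Roch together with Serre duality (giving $h^2(nL)=h^0(K_X-nL)=0$ for $n\gg 0$, because $L\cdot(K_X-nL)<0$ eventually and so $K_X-nL$ cannot be effective) forces $h^0(nL)$ to grow quadratically in $n$, yielding $\kappa(L)=2$ and contradicting the hypothesis.

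For semiampleness my plan is to realize some multiple of $L$ as a $\qq$-pullback from a smooth curve. Take $n\gg 0$ with $h^0(nL)\geq 2$ and $\dim\varphi_{|nL|}(X)=1$, and decompose $|nL|=|M|+F$ with $F$ the fixed part. Using that $L$ is nef and $L^2=0$, one checks by summing non-negative terms which must add to zero that $L\cdot M=L\cdot F=M^2=M\cdot F=F^2=0$. From $M^2=0$ and the absence of fixed components I conclude $|M|$ is base-point-free (an isolated base point would force $M_1\cdot M_2>0$), and so it defines a morphism to a curve. Taking the Stein factorization $f\colon X\to \bar B$ onto a smooth projective curve, $M=f^*A$ for an effective divisor $A$ of positive degree on $\bar B$.

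Now let $\tilde F$ be a general fibre of $f$. From $M\equiv\deg(A)\,\tilde F$ and $L\cdot M=0$ I get $L\cdot\tilde F=0$, and then the Hodge index theorem (the intersection form on $\Pic(X)_\rr$ is hyperbolic of signature $(1,\rho(X)-1)$, admitting no totally isotropic plane) forces $L\equiv\alpha\tilde F$ numerically for some $\alpha\in\qq_{>0}$. Consequently $L\cdot C=0$ for every component $C$ of any fibre of $f$, and every component of $F$ must itself lie in a fibre (otherwise it would surject onto $\bar B$ and meet $\tilde F$ with positive degree). For each fibre $\tilde F_p=\sum_i m_{p,i}C_{p,i}$, the equality $nL=f^*A+F$ combined with $L\cdot C_{p,i}=0$ and $f^*A\cdot C_{p,i}=0$ gives $F\cdot C_{p,i}=0$; Zariski's lemma (the intersection matrix of the $C_{p,i}$ is negative semi-definite with kernel spanned by the class of $\tilde F_p$) then shows that the portion of $F$ supported on $\tilde F_p$ equals $\lambda_p\tilde F_p$ for some $\lambda_p\in\qq_{\geq 0}$.

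Putting these together, $F=f^*\bigl(\sum_p \lambda_p\, p\bigr)$ as $\qq$-divisors, so $nL=f^*D'$ as $\qq$-divisors, with $D':=A+\sum_p\lambda_p\,p$ of positive degree. After clearing denominators $knL=f^*(kD')$ with $kD'$ an integer divisor of positive degree on $\bar B$; for $m\gg 0$ the system $|mkD'|$ is base-point-free on the smooth curve $\bar B$ and pulls back to a base-point-free linear system $|mknL|$ on $X$, proving that $L$ is semiample. The delicate point is precisely the passage from the numerical identity $L\equiv\alpha\tilde F$ to the $\qq$-linear identity $nL=f^*D'$: numerical proportionality by itself is not enough to conclude semiampleness, and one must exploit the fibre-by-fibre structure of the fixed part $F$ via Zariski's lemma in order to genuinely pull $nL$ back from $\bar B$.
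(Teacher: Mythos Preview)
Your argument is correct and shares the paper's opening: reduce to $L^2=0$, decompose $|nL|$ into moving part $M$ and fixed part $F$, observe that all the mutual intersections vanish, and bring in the Hodge index theorem on the resulting isotropic classes. The paper is much terser at the end: having shown that the fixed part satisfies $F^2=F\cdot L=0$, it simply invokes Hodge to obtain $[F]=m[L]$ and stops --- implicitly, the base-point-free moving part $M=nL-F$ is then linearly equivalent to $(n-m)L$, which is immediate once numerical and linear equivalence agree (as they do on the rational surfaces the paper actually cares about). Your extra step --- passing to the fibration $f\colon X\to\bar B$ via Stein factorisation and using Zariski's lemma fibre by fibre to show that $F$ is a genuine $\qq$-pullback from $\bar B$, so that $nL=f^*D'$ as $\qq$-divisors rather than merely numerically --- is precisely what makes the argument go through for an arbitrary smooth projective surface with no hypothesis on $q(X)$ or on torsion in $\Pic(X)$. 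So the two proofs diverge only at this final point: the paper's is shorter but leans on the ambient context, while yours is self-contained in the generality of the lemma as stated.
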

\begin{proof}
First of all observe that since $L$ is nef with $\kappa(L)=1$, then $L^2=0$.
Let $n\in \nn$ such that $h^0(nL)>1$. If $B$ is the fixed part of $|nL|$, then $\kappa(B)\leq 1$ so that $B^2\leq 0$. Since $nL-B$ is nef with $1\leq \kappa(nL-B)\leq \kappa(nL)=1$, then 
$(nL-B)^2=0$. This implies that $B^2=B\cdot L=0$. The restriction of the intersection 
form to the space spanned by $[B]$ and $[L]$ is null. By the Hodge index theorem this implies that $[B]=[\alpha L]$ for some $\alpha\in\qq_{>0}$. Thus the base locus of $|(n-\alpha)L|$ is $0$-dimensional and, since $L^2=0$, we see that it is actually empty.
This proves the claim.
\end{proof}
  
We are finally ready to prove the main theorem of this section.

\begin{theorem}\label{nef-big}
Let $X$ be a smooth rational surface with $\kappa(-K_X)\geq 1$. 
Then any nef $\qq$-divisor is semiample.  
  \end{theorem}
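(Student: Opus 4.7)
Since $\SAmple(X)\subset\Nef(X)$ is already Proposition~\ref{eff}, I would fix a nef divisor $L$ and show it is semiample via a case analysis on $L^{2}$ and $L\cdot K_{X}$, steering toward Lemma~\ref{nef+big} or Lemma~\ref{k=1}.

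If $L^{2}>0$ then $L$ is nef and big, so $\kappa(L)=2$ and Lemma~\ref{nef+big} applies directly. Assume then $L^{2}=0$. Since $\kappa(-K_{X})\geq 1$ guarantees that some positive multiple of $-K_{X}$ is effective, pairing with the nef class $L$ gives $L\cdot K_{X}\leq 0$. When $L\cdot K_{X}<0$, Riemann--Roch yields $\chi(nL)=1-\tfrac{n}{2}L\cdot K_{X}\to\infty$, while $K_{X}-nL$ cannot be effective for $n$ large because $(K_{X}-nL)\cdot L=L\cdot K_{X}<0$ would violate nefness of $L$. Hence $h^{0}(nL)$ grows linearly in $n$, so $\kappa(L)\geq 1$ and Lemma~\ref{k=1} finishes.

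It remains to treat the case $L^{2}=L\cdot K_{X}=0$. If $\kappa(-K_{X})=2$, then $-K_{X}$ is big, so $K_{X}^{2}>0$, and Hodge index immediately forces $L\equiv 0$. If instead $\kappa(-K_{X})=1$, I would invoke the anticanonical fibration: pick $n_{0}$ with $h^{0}(-n_{0}K_{X})\geq 2$ and decompose $-n_{0}K_{X}=M+F$ into moving and fixed parts. The moving part $M$ is nef with $M^{2}=0$, so Proposition~\ref{fib} provides $M\sim aD$ with $D$ integral and $|D|$ a basepoint-free pencil defining a fibration $X\to\pp^{1}$. From $L\cdot(-n_{0}K_{X})=0$ together with $L\cdot M,\,L\cdot F\geq 0$ one deduces $L\cdot D=0$, and Hodge index on $D^{\perp}/\langle D\rangle$ (negative definite since $D^{2}=0$ and $D\not\equiv 0$) then forces $L\equiv cD$ in $\Pic(X)_{\rr}$ with $c\geq 0$. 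Thus $L$ is either numerically trivial or satisfies $\kappa(L)=1$, and in the latter situation Lemma~\ref{k=1} concludes.

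The step I expect to be the main obstacle is precisely this last subcase $\kappa(-K_{X})=1$ with $L\cdot K_{X}=0$: here $K_{X}^{2}$ can vanish, so Hodge index against $K_{X}$ alone is insufficient, and one genuinely needs the elliptic/quasi-elliptic fibration supplied by Proposition~\ref{fib} to trap $L$ as a $\qq$-multiple of the fiber class.
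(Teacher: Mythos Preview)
Your overall strategy---case split on $L^{2}$ and $L\cdot K_{X}$, then pin $L$ against the anticanonical data via Hodge index---is exactly the paper's approach, and the cases $L^{2}>0$ and $L\cdot K_{X}<0$ are handled correctly. There is, however, a genuine error in the subcase $\kappa(-K_{X})=2$ with $L^{2}=L\cdot K_{X}=0$: bigness of $-K_{X}$ does \emph{not} imply $K_{X}^{2}>0$. For instance, blow up $\pp^{2}$ at ten points on a line $\ell$; then $K_{X}^{2}=-1$, yet $-K_{X}\sim 2H+\tilde{\ell}$ is big as the sum of a big nef class and an effective one. So your appeal to Hodge index against $K_{X}$ fails as written.

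The fix is easy, and it is worth noting that the paper avoids this split entirely by using the Zariski decomposition $-K_{X}\sim N+P$ rather than the moving/fixed decomposition of a particular multiple. From $L\cdot(-K_{X})=0$ and nefness of $L$ one gets $L\cdot P=0$; since $P$ is nef with $\kappa(P)=\kappa(-K_{X})\geq 1$, either $P^{2}>0$ (then Hodge index against the nef and big class $P$ genuinely forces $L\equiv 0$) or $P^{2}=0$ (then the span of $[L]$ and $[P]$ is totally isotropic, hence one-dimensional, giving $L\sim mP$ with $m\in\qq_{\geq 0}$). Either way $L$ is a non-negative rational multiple of a class with $\kappa\geq 1$, and Lemmas~\ref{nef+big}, \ref{k=1} finish. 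Your moving/fixed argument in the $\kappa(-K_{X})=1$ branch is correct and amounts to the same thing; you could also repair the $\kappa(-K_{X})=2$ branch directly via Kodaira's lemma ($-K_{X}\equiv A+E$ with $A$ ample, $E$ effective, so $0=L\cdot(-K_{X})\geq L\cdot A\geq 0$ forces $L\equiv 0$), but the Zariski decomposition handles both at once.
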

\begin{proof}
If $L$ is a nef divisor of $X$, then $L^2\geq 0$.
If $L^2>0$ or $K_X\cdot L<0$, then $h^0(L)\geq 2$ by Riemann-Roch formula so that $\kappa(L)\geq 1$ and we conclude by Lemma~\ref{nef+big} and~\ref{k=1}.
We assume now that $L^2=K_X\cdot L=0$ and let $-K_X\sim N+P$ be the Zariski decomposition of $-K_X$. 
Then $P\cdot L=0$ because $P$, $N$ are effective and $L$ is nef. 
The restriction of the intersection form of $\Pic(X)$ to the space spanned by $[P]$ and $[L]$ is null. Thus $L\sim mP$ for some $m\in\qq_{\geq 0}$. Hence $\kappa(P)=\kappa(-K_X)\geq 1$ and  $L$ is semiample by Lemma~\ref{nef+big} and~\ref{k=1}.
\end{proof}

\begin{remark}
Observe that if $\kappa(-K_X)=0$ and the positive part $P$ of the Zariski decomposition of $-K_X$ is non-trivial, then the nef and the semiample cone of $X$ do not coincide. This implies that the Cox ring $R(X)$ is not finitely generated by~\cite[Cor. 2.6]{ahl}. An easy example of such surfaces is given by the blow-up of $\pp^2$ at $9$ points in very general position. An example with $\Eff(X)$ rational polyhedral is given in~\cite[Ex.~1.4.1]{n1}.
\end{remark}

\section{Cox rings of rational surfaces with $\kappa(-K_X)=1$}

We consider the problem of the finite generation of Cox rings of smooth rational surfaces with $\kappa(-K_X)=1$.

\begin{proposition}\label{-K}
Let $X$ be a smooth rational surface with $\kappa(-K_X)=1$ and let
\[
-K_X\sim N+P
\]
be the Zariski decomposition of $-K_X$. Then $P\sim aC$ for some $a\in\qq_{>0}$, where $C$ is a smooth elliptic curve with $C^2=0$ and $h^0(C)=2$.
 \end{proposition}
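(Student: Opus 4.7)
The plan is to combine the Zariski decomposition of $-K_X$ with Lemma~\ref{k=1} and Proposition~\ref{fib}. The starting point is the standard fact that the positive part inherits the Iitaka dimension, i.e.\ $\kappa(P) = \kappa(-K_X) = 1$: sections of $-mK_X$ all vanish along $mN$, producing an identification $H^0(-mK_X) = H^0(\lfloor mP \rfloor)$ for every $m$ clearing the denominators. I would invoke this from the general theory of Zariski decompositions rather than reprove it here.

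Once $P$ is known to be nef with $\kappa(P) = 1$, Lemma~\ref{k=1} immediately supplies $P^2 = 0$ together with the semiampleness of $P$. I would then pick $m \in \nn$ so that $M := mP$ is an integral Cartier divisor with $|M|$ base point free. The divisor $M$ satisfies the hypotheses of Proposition~\ref{fib} (no fixed components, since it is base point free, and $M^2 = 0$), which produces a smooth integral curve $D$ with $h^0(D) = 2$ and $M \sim a'D$ for some positive integer $a'$. Setting $C := D$ and $a := a'/m \in \qq_{>0}$ yields $P \sim aC$ in $\Pic(X)_{\qq}$, and $C^2 = 0$ is inherited from $M^2 = 0$.

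It remains to identify $C$ as an elliptic curve, which I would handle by adjunction. Combining the orthogonality $N \cdot P = 0$ built into the Zariski decomposition with $P^2 = 0$ gives $K_X \cdot P = -(N+P) \cdot P = 0$, and hence $K_X \cdot C = 0$; adjunction on the smooth curve $C$ then forces $2p_a(C) - 2 = C^2 + K_X \cdot C = 0$, so $p_a(C) = 1$. I foresee no real obstacle: the argument is essentially a chaining of Lemma~\ref{k=1} and Proposition~\ref{fib} once the invariance $\kappa(P) = \kappa(-K_X)$ is acknowledged. The only mild subtlety is the existence of an $m$ making $mP$ simultaneously integral and base point free, which follows routinely from semiampleness.
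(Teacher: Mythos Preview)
Your proposal is correct and follows essentially the same route as the paper: invoke $\kappa(P)=\kappa(-K_X)=1$, apply Lemma~\ref{k=1} for semiampleness and $P^2=0$, feed a suitable multiple of $P$ into Proposition~\ref{fib} to obtain $P\sim aC$, and finish with adjunction using $-K_X\cdot P=(N+P)\cdot P=0$. You are simply more explicit than the paper about clearing denominators before applying Proposition~\ref{fib}, which is a harmless elaboration.
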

\begin{proof}
Since $P$ is nef and $\kappa(P)=1$, then $P$ is semiample by Lemma~\ref{k=1}.
By Proposition~\ref{fib} we have $P\sim aC$ for some smooth integral curve $C$ with $C^2=0$ and $h^0(C)=2$.
By the genus formula and $-K_X\cdot P=0$ we get $2g(C)-2=C\cdot K_X=0$.
\end{proof}

\begin{theorem}\label{equiv}
Let $X$ be a smooth rational surface with $\kappa(-K_X)=1$. Then the following are equivalent:
\begin{enumerate}
\item the effective cone $\Eff(X)$ is rational polyhedral;
\item the Cox ring $R(X)$ is finitely generated;
\item $X$ contains finitely many $(-1)$-curves.
\end{enumerate}
\end{theorem}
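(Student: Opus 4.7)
My plan is to establish the chain $(ii)\Rightarrow(i)\Rightarrow(iii)\Rightarrow(i)\Rightarrow(ii)$, i.e.\ to prove $(i)\Leftrightarrow(iii)$ first and then close the loop through $(ii)$. The implication $(ii)\Rightarrow(i)$ is standard: a finitely generated Cox ring forces $X$ to be a Mori dream space, and in particular its effective cone is rational polyhedral. The implication $(i)\Rightarrow(iii)$ is immediate from Proposition~\ref{eff-rays}: the extremal rays of $\Eff(X)$ are generated by classes of integral curves of negative self-intersection, so there are only finitely many such curves and \emph{a fortiori} only finitely many $(-1)$-curves.

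The substantive step is $(iii)\Rightarrow(i)$. By Corollary~\ref{effneg} it suffices to show that, under the hypothesis $\kappa(-K_X)=1$ and finiteness of the set of $(-1)$-curves, $X$ contains only finitely many $(-2)$-curves. Apply Proposition~\ref{-K} to write $-K_X\sim N+aC$ (Zariski decomposition), where $C$ is a smooth elliptic curve with $C^2=0$ and $h^0(C)=2$; the linear system $|C|$ defines an elliptic fibration $f\colon X\to\pp^1$. Let $E$ be a $(-2)$-curve. If $E$ is a component of $N$ we are done, since $N$ has only finitely many components. Otherwise $E\cdot N\geq 0$ and $E\cdot C\geq 0$, while $E\cdot K_X=0$ forces $E\cdot N+aE\cdot C=0$; both summands are nonnegative, hence $E\cdot C=0$. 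Since $C$ is a nef divisor moving in a pencil, this forces $E$ to be contained in a fiber of $f$. An elliptic fibration over $\pp^1$ has only finitely many reducible fibers (otherwise $\rho(X)=\infty$), each with finitely many components, so only finitely many $(-2)$-curves arise this way. This finishes $(iii)\Rightarrow(i)$.

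For $(i)\Rightarrow(ii)$, the hypothesis $\kappa(-K_X)=1$ allows us to invoke Theorem~\ref{nef-big}, yielding $\Nef(X)=\SAmple(X)$. Combined with the rational polyhedrality of $\Eff(X)$, the characterization of finite generation of $R(X)$ for surfaces quoted from~\cite[Cor.~2.6]{ahl} (namely $R(X)$ is finitely generated if and only if $\Eff(X)$ is rational polyhedral and every nef divisor is semiample) gives $(ii)$.

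The main obstacle is the reduction from $(-2)$-curves to finitely many loci in $(iii)\Rightarrow(i)$: it is not a priori clear that an infinite supply of $(-2)$-curves cannot accumulate toward $K_X^\perp$, but the elliptic fibration furnished by Proposition~\ref{-K} is exactly what rules this out, by confining the ``non-exceptional'' $(-2)$-curves inside the (finitely many) reducible fibers.
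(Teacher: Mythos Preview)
Your proof is correct and follows essentially the same route as the paper: the substantive step $(iii)\Rightarrow(i)$ is handled identically, via Corollary~\ref{effneg} together with Proposition~\ref{-K} to confine every $(-2)$-curve either to the support of $N$ or to a reducible fiber of $\varphi_{|C|}$. The only minor difference is that the paper proves $(ii)\Rightarrow(iii)$ directly (a section cutting out a $(-1)$-curve must occur among any homogeneous generating set of $R(X)$), while you instead go $(ii)\Rightarrow(i)$ via the Mori dream space / \cite[Cor.~2.6]{ahl} characterization; both routes are short and standard.
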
 
\begin{proof}
(i)$\Rightarrow$(ii): by~\cite[Cor. 2.6]{ahl} it is enough to prove that the nef and semiample cone of $X$ coincide and this is proved in Theorem~\ref{nef-big}.

(ii)$\Rightarrow$(iii): if $C$ is a $(-1)$-curve and $x\in H^0(C)$, write $x=\sum_im_i$, where $m_i$ are monomials in the generators of $R(X)$. If $D_i$ is the zero locus of $m_i$, then $C\sim D_i$. Since $C$ is integral with $C^2<0$, then $D_i=C$ so that $m_i=\alpha_i x$, with $\alpha_i\in\cc$. Hence $x$ appears in any set of homogeneous generators of $R(X)$. Since $R(X)$ is finitely generated, this implies that there are finitely many $(-1)$-curves.

(iii)$\Rightarrow$(i):
We can assume that $\rho(X)\geq 9$ since otherwise $\kappa(-K_X)=2$.
By Corollary~\ref{effneg} it is enough to prove that $X$ contains finitely many $(-2)$-curves.

Let $-K_X\sim N+aC$ be the Zariski decomposition of $-K_X$ as given in Proposition~\ref{-K}. 
If $E$ is a $(-2)$-curve, then $-K_X\cdot E=0$, so that either $E$ is contained in the support of $N$  or $E\cdot C=0$. In the last case $E$ is contracted by the morphism $\varphi_{|C|}: X\to\pp^1$, which is a fibration by Proposition~\ref{-K}.
Since the support of $N$ contains a finite number of prime divisors and $\varphi_{|C|}$ has finitely many reducible fibers, then $X$ contains finitely many $(-2)$-curves.
\end{proof}

Let $X$ be a smooth rational surface with $\kappa(-K_X)=1$ and let $-K_X\sim P+N$ be the Zariski decomposition. By Proposition~\ref{-K} a multiple of $P$ defines an elliptic fibration. We have the following commutative diagram:
\[
\xymatrix{
& X\ar[ld]_{\pi} \ar[rd]^{\varphi_{|rP|}}& \\
Y\ar[rr]_{\varphi_{|-mK_Y|}} && \pp^1
}
\]
where $\pi$ is the blow-down map of all the $(-1)$-curves contained in the fibers of $\varphi_{|rP|}$ and $m$ is the smallest positive integer such that $h^0(-mK_Y)=2$.
The surface $Y$ is the {\em relative minimal model} and its anticanonical divisor $-K_Y$ is nef. If $m=1$, then $Y$ is called a {\em jacobian elliptic surface}.

Let  $\varphi:X\to \pp^1$ be a fibration with connected fibers and let 
Let $L\sim aC$, with $a\in \qq$, where $C$ is an effective divisor whose support 
is contained in the fibers of  $\varphi$. 
\begin{definition}
The multiplicity of $L$ at $p\in X$ is:
\[
\mu(L,p):=a\mu(C_p,p),
\]
where $C_p\in |C|$ is the unique curve through $p$,
and $\mu(L,p)=0$ if there is no such curve.
\end{definition}
 
\begin{lemma}\label{mu}
Let $\pi: \tilde{X}\to X$ be the blow-up at a point $p$ of a smooth rational surface with $\kappa(-K_X)=1$. Then $\kappa(-K_{\tilde{X}})=1$ if and only if $\mu(-K_X,p)>1$.
In this case we have the Zariski decomposition $-K_{\tilde{X}}\sim 
\tilde P+\tilde N$ with $\tilde P=\pi^*P$ if $\mu(N,p)\geq 1$ and otherwise
\[
\tilde P= \frac{\mu(-K_X,p)-1}{\mu(P,p) }\,\pi^*P.
\]
\end{lemma}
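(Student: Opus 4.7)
The plan is to guess the Zariski decomposition of $-K_{\tilde X}$ from the candidate $\tilde P:=(1-\tfrac{1}{\mu})\pi^*P$, verify the axioms when $\mu>1$, and then read off $\kappa$. The computation rests on the key identity obtained by choosing the unique fiber $C_p$ through $p$ as representative of the class $[C]$ coming from Proposition~\ref{-K}:
\[
\pi^*P \sim a\pi^*C_p = a(\tilde C_p+kE) = a\tilde C_p+\mu E,
\]
where $k=\mu(C_p,p)$, so $\mu=ak$. Substituting into $-K_{\tilde X}=\pi^*(-K_X)-E$ and simplifying yields
\[
\tilde N := -K_{\tilde X}-\tilde P \sim \pi^*N+\tfrac{a}{\mu}\tilde C_p,
\]
an effective $\mathbb{Q}$-divisor.

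Next I would verify that $(\tilde P,\tilde N)$ is the Zariski decomposition under the assumption $\mu>1$. Nefness of $\tilde P$ is immediate from nefness of $\pi^*P$ and positivity of $1-1/\mu$. The orthogonality $\tilde P\cdot\tilde C_p=0$ and $\tilde P\cdot\tilde N_i=0$ on the components of $\tilde N$ reduces by the projection formula to $P\cdot C_p=P\cdot C=0$ (which holds since $P\sim aC$ and $C^2=0$) together with the original relation $P\cdot N_i=0$. The main technical point is the negative-definiteness of the intersection matrix on the components $\{\tilde N_i\}\cup\{\tilde C_p\}$ of $\tilde N$: in the typical case $p\notin\mathrm{supp}(N)$ this matrix is block diagonal with the original negative-definite block for $N$ and the entry $\tilde C_p^{\,2}=-k^2<0$, and the remaining cases are handled by a direct calculation using that $N_i\cdot C=0$ kills the off-diagonal terms. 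Uniqueness of the Zariski decomposition then identifies $\tilde P$ with the positive part.

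With $\tilde P=(1-1/\mu)\pi^*P$ in hand, the Iitaka dimension follows from the sandwich
\[
1=\kappa(P)=\kappa(\pi^*P)=\kappa(\tilde P)\leq\kappa(-K_{\tilde X})\leq\kappa(\pi^*(-K_X))=\kappa(-K_X)=1,
\]
where the lower bound uses effectivity of $\tilde N$ and the upper bound uses $-K_{\tilde X}\leq\pi^*(-K_X)$.

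For the converse, I would assume $\mu\leq 1$ and bound sections directly. Elements of $|-mK_{\tilde X}|$ correspond to divisors in $|-mK_X|$ of multiplicity at least $m$ at $p$. For $m$ sufficiently divisible, every such divisor has the form $mN+D$ with $D\in|mP|=|maC|$, and by Proposition~\ref{fib} the elements of $|maC|$ are precisely sums of $ma$ fibers of $\varphi_{|C|}$. Since only the fiber $C_p$ contributes to the multiplicity at $p$, the multiplicity of $D$ at $p$ is at most $ma\cdot k=m\mu$, with equality exactly when $D=maC_p$. Hence if $\mu<1$ the linear system is empty for every $m$, while $\mu=1$ leaves at most one element, giving $\kappa(-K_{\tilde X})\leq 0$ and completing the equivalence.
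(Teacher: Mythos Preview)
Your sandwich argument for $\kappa(-K_{\tilde X})=1$ when $\mu>1$ is clean and in fact does not require verifying the full Zariski decomposition---effectivity of $\tilde N$ already suffices for the inequalities. Your explicit converse via multiplicities is also a legitimate alternative to the paper's route, which simply reads $\kappa(-K_{\tilde X})=\kappa(\tilde P)$ off the decomposition.

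The genuine gap is in the negative-definiteness step, which you do need for the formula $\tilde P=(1-\tfrac1\mu)\pi^*P$. You list the components of $\tilde N$ as $\{\tilde N_i\}\cup\{\tilde C_p\}$ and treat $\tilde C_p$ as a single irreducible curve with self-intersection $-k^2$. But the fiber $C_p\in|C|$ through $p$ is in general reducible---indeed the main application (Proposition~\ref{exist}) places $p$ on a reducible singular fiber---so $\tilde C_p$ contributes several irreducible components to the matrix, and the ``block diagonal with the single entry $-k^2$'' picture collapses. The sentence ``the remaining cases are handled by a direct calculation using that $N_i\cdot C=0$ kills the off-diagonal terms'' does not rescue this: $N_i\cdot C=0$ only gives $N_i\cdot C_p=0$, not orthogonality of $\tilde N_i$ to the individual irreducible components of $C_p$, and it says nothing about the intersections among the components of $\tilde C_p$ themselves. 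The paper's remedy is to observe that (an effective multiple of) $B:=\pi^*P-\mu E$ has support \emph{strictly} contained in the fiber $\pi^*C_p$ of the fibration $\varphi_{|\pi^*C|}:\tilde X\to\pp^1$ (it misses the component $E$); Zariski's lemma on fibers \cite[Lemma~8.2,~III]{bpv} then forces the intersection form on its irreducible components to be negative definite, and together with $B\cdot\pi^*N=0$ this handles all of $\tilde N$. A similar oversight appears in your converse: the bound $\mathrm{mult}_p(D)\le m\mu$ for $D\in|mP|$ ignores the contribution $\mathrm{mult}_p(mN)$ to $\mathrm{mult}_p(mN+D)$, so as written that argument only covers $p\notin\mathrm{supp}(N)$.
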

\begin{proof}
Let $-K_X\sim P+N$ be the Zariski decomposition and
$\mu_P, \mu_N$ be the multiplicities of $P$ and $N$ in $p$.
If $\mu_N\geq 1$ then up to multiples $\pi^*N-E$ is linearly equivalent to an effective divisor which is orthogonal to $\pi^*P$ and such that the intersection matrix on the components of its support is negative definite.
Thus the Zariski decomposition of $-K_{\tilde X}$ is:
 $$-K_{\tilde X}\sim  \pi^*P+(\pi^*N-E).$$
 Otherwise, if $\mu_N<1$ and $\mu_P+\mu_N\geq 1$, consider the following decomposition
 $$-K_{\tilde X}=\frac{\mu_P+\mu_N-1}{\mu_P}\pi^*P+(\pi^*N-\mu_NE)+(1-\mu_N)\left(\frac{1}{\mu_P}\pi^*P-E\right).$$
Observe that the second and third term in the sum are linearly equivalent, up to multiples, to effective divisors whose supports are properly contained in the fibers of $\pi^*P$. In particular, for $\frac{1}{\mu_P}\pi^*P-E$, such support is contained in the unique fiber containing $E$. 
Since $E$ is not contained in any of the two supports, by the definition of $\mu_P$ and $\mu_N$,
then their sum can not contain any positive rational multiple of $\pi^*P$, so the intersection form on its components is negative definite by~\cite[Lemma 8.2, III]{bpv}. 
Since the two divisors are clearly orthogonal to the first term in the sum, then their sum is the negative part of the Zariski decomposition of $-K_{\tilde X}$. 
Finally, if $\mu_P+\mu_N>1$, then $\kappa(-K_{\tilde X})=1$ .

Conversely, observe that if $\mu(-K_X,p)= 1$, then $\kappa(-K_{\tilde X})=0$ by the previous decomposition.
The same clearly holds if  $\mu(-K_X,p)< 1$, since in this case any multiple of $-K_{\tilde X}=\pi^*(-K_X)-E$ is not linearly equivalent to an effective divisor.
\end{proof}

\begin{remark}\label{rem} Observe that if $\mu_P=\mu(P,p)>1$ we can always decompose $-K_{\tilde X}$ as $\big(1-\frac{1}{\mu_P}\big)\pi^*P+(\frac{1}{\mu_P}\pi^*P+\pi^*N-E)$,
where the two terms are effective, orthogonal divisors and the second one is negative semidefinite.

\end{remark}

\begin{proposition}\label{exist}
Let $n\geq 10$ be an integer. There exists a smooth rational surface $X$ with $\kappa(-K_X)=1$ and $\rho(X)=n$.
\end{proposition}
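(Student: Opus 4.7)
The plan is to produce $X$ as an iterated blow-up of a fixed Jacobian rational elliptic surface, invoking Lemma~\ref{mu} at every stage to keep $\kappa(-K)$ equal to $1$. First I would take $Y$ to be any Jacobian rational elliptic surface possessing a singular fiber $F_0$ of Kodaira type $II^*$; such $Y$ exists classically, e.g.\ the extremal rational elliptic surface with singular fibers $II^*+I_1+I_1$ (or the surface $y^2=x^3+t^5$). Then $\rho(Y)=10$ and $\kappa(-K_Y)=1$, so the case $n=10$ is handled by $X:=Y$. The nine components of $F_0$ form a $\tilde E_8$ dual graph whose Kodaira multiplicities include a pair of adjacent entries $5$ and $6$; the two corresponding components meet transversally at a single point $p_0$ where the fiber divisor $F_0$ has multiplicity $11$, and this is where the iteration will begin.

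For $n>10$ I would set $X_0:=Y$, $q_0:=p_0$, and inductively $X_{k+1}:=\mathrm{Bl}_{q_k}X_k$, with $q_k$ chosen to be any point of maximum fiber-multiplicity on the fiber of $X_k$ over the image of $F_0$. Then $\rho(X_k)=10+k$, so setting $X:=X_{n-10}$ gives the right Picard number, and everything reduces to checking $\kappa(-K_{X_k})=1$ for all $k\geq 0$. Assume this inductively at step $k$: Proposition~\ref{-K} writes the positive part of the Zariski decomposition in the form $P_k\sim c_kC_k$ with $C_k$ the integral fiber class and $c_k\in\qq_{>0}$, and $c_0=1$ because $Y$ is Jacobian. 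By Lemma~\ref{mu}, $\kappa(-K_{X_{k+1}})=1$ exactly when $c_k\mu_k>1$, where $\mu_k$ is the local multiplicity of the fiber of $X_k$ at $q_k$, and in that case $c_{k+1}=c_k-1/\mu_k$.

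To verify $c_k\mu_k>1$ forever I would analyse the Fibonacci-type recursion governing the maximum fiber multiplicity. If $q_k$ is the transverse intersection of two fiber components with coefficients $a\leq b$, so $\mu_k=a+b$, then blowing it up creates an exceptional curve of coefficient $a+b$ meeting both strict transforms; the new maximum on the fiber is $\mu_{k+1}=b+(a+b)=a+2b$, attained at the intersection of the new exceptional with the strict transform of the coefficient-$b$ component. Since $b\geq\mu_k/2$, this gives $\mu_{k+1}\geq(3/2)\mu_k$, hence $\mu_k\geq 11\cdot(3/2)^k$ and
\[
\sum_{j\geq 0}\frac{1}{\mu_j}\leq\frac{1}{11}\sum_{j\geq 0}\left(\frac{2}{3}\right)^j=\frac{3}{11}<1.
\]
Consequently $c_k\geq 1-3/11=8/11>0$ and $c_k\mu_k\geq 8$ for every $k\geq 0$, so Lemma~\ref{mu} applies at every step and the induction closes.

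The step I expect to be the main obstacle is the combinatorial bookkeeping behind this recursion: one must confirm that at each stage the \emph{global} maximum on the fiber really is at the newly created intersection, rather than at some older intersection surviving from the original $II^*$ configuration or from an earlier blow-up step. Intersections of strict transforms of the original $II^*$ components have multiplicity at most $10$, so they are dominated once $\mu_k\geq 17$ (which holds from $k\geq 1$), and intersections created at earlier steps involve coefficient sums strictly smaller than those in the latest recursion step, so they are dominated automatically. This detailed verification on the dual graph of the fiber is where the bulk of the actual argument sits.
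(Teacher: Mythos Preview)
Your approach is essentially the paper's: iterate Lemma~\ref{mu} by blowing up at high-multiplicity points of a fixed singular fiber and control $c_k=c_0-\sum_{j<k}1/\mu_j$ through a Fibonacci-type recursion on the fiber multiplicities (the paper starts from a type~$IV$ fiber with $\mu_0=3$ and blows up at the intersection of the two most recent exceptionals, whereas you start from $II^*$ with $\mu_0=11$, which only makes the geometric-series estimate easier). One remark: the ``main obstacle'' you flag is self-imposed---nothing in Lemma~\ref{mu} requires $q_k$ to be a \emph{global} maximum of fiber multiplicity, only that $c_k\mu_k>1$, so you may simply \emph{define} $q_k$ as the specific intersection point your recursion produces and the comparison with older intersections on the $II^*$ configuration becomes irrelevant.
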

\begin{proof}
Let $X_0$ be a smooth rational surface such that $\varphi_{|-K_{X_0}|}: X\to\pp^1$ is an elliptic fibration which admits a fiber $P_0$ with a triple point. For example, $X_0$ can be the blow-up of $\pp^2$ at the nine base points of the pencil $xy(x+y)+t(x^3+y^3+z^3)=0$.
We construct a sequence of blow-ups $\pi_i: X_i\to X_{i-1}$, where $\pi_i$ is the blow-up of $X_{i-1}$ at a point $p_{i-1}$ chosen in the following way.
Let $p_0$ be the triple point of $P_0$ and let $p_i$ lie on the intersection of the exceptional divisor $E_{i-1}=\pi_i^{-1}(p_{i-1})$ and the strict transform $E_{i-2}'$ of $E_{i-2}$ (see Figure~\ref{blow-up} below).
Let $-K_{X_i}\sim P_i+N_i$ be the decomposition given in Remark \ref{rem} with $P_{i}\sim (1-\frac{1}{\mu_{i-1}})\pi^*_{i}(P_{i-1})$ where $\mu_{i-1}=\mu(P_{i-1},p_{i-1})$. This gives the formula:
\[
P_{i}\sim \prod_{k=0}^{i-1}(1-\frac{1}{\mu_k})~\phi^*_{i}P_0,
\]
where $\phi_i=\pi_1\circ\cdots\circ\pi_i: X_i\to X_0$ is the blow-down map. In this way we can recursively calculate $\mu_{i}$ obtaining:
\begin{equation}\label{mui}
\mu_{i}=\prod_{k=0}^{i-1}(1-\frac{1}{\mu_k})~\mu(\phi_i^*P_0,p_{i}).
\end{equation}
Observe that $\mu_0=3$ since $p_0$ is a triple point of a fiber of the elliptic fibration $\varphi_{|P_0|}$. Let $a_i:=\mu(\phi_i^*P_0,p_i)$, then $a_0=3$, $a_1=4$ and
\[
a_i=a_{i-1}+a_{i-2}.
\]
To see this observe that $\phi_i^*P_0$ contains $E_{i-1}$ with multiplicity $a_{i-1}$ and the strict transform of $E_{i-2}$ with multiplicity $a_{i-2}$. If we denote by $b_i:=a_i/a_{i-1}$, then an easy calculation based on~\eqref{mui} gives
\[
\mu_i=(\mu_{i-1}-1)b_i.
\]
Observe that the $a_i$'s satisfy a Fibonacci type recursion and the $b_i$'s are rational approximations in the continued fraction expansion of the number $\frac{1}{2}(1+\sqrt{5})$. In particular we claim that $b_i>8/5$ for $i>4$, which can be easily proved by induction using the fact that $b_0=7/3$ and $b_i=1+1/b_{i-1}$.
We prove now by induction that $\mu_i>8/3$ for each $i$. We have $\mu_0=3$ and
$\mu_{i-1}>8/3=1+1/(8/5-1)>1+1/(b_i-1)$ so that $\mu_i=(\mu_{i-1}-1)b_i>\mu_{i-1}>8/3$. 
Since $\mu_i>1$, then $\kappa(-K_{X_i})=1$ by Lemma~\ref{mu}. Thus the surface $X_{n-9}$ has the required properties.
\end{proof}

\begin{figure}[h]
\begin{center}
\includegraphics[scale=.4]{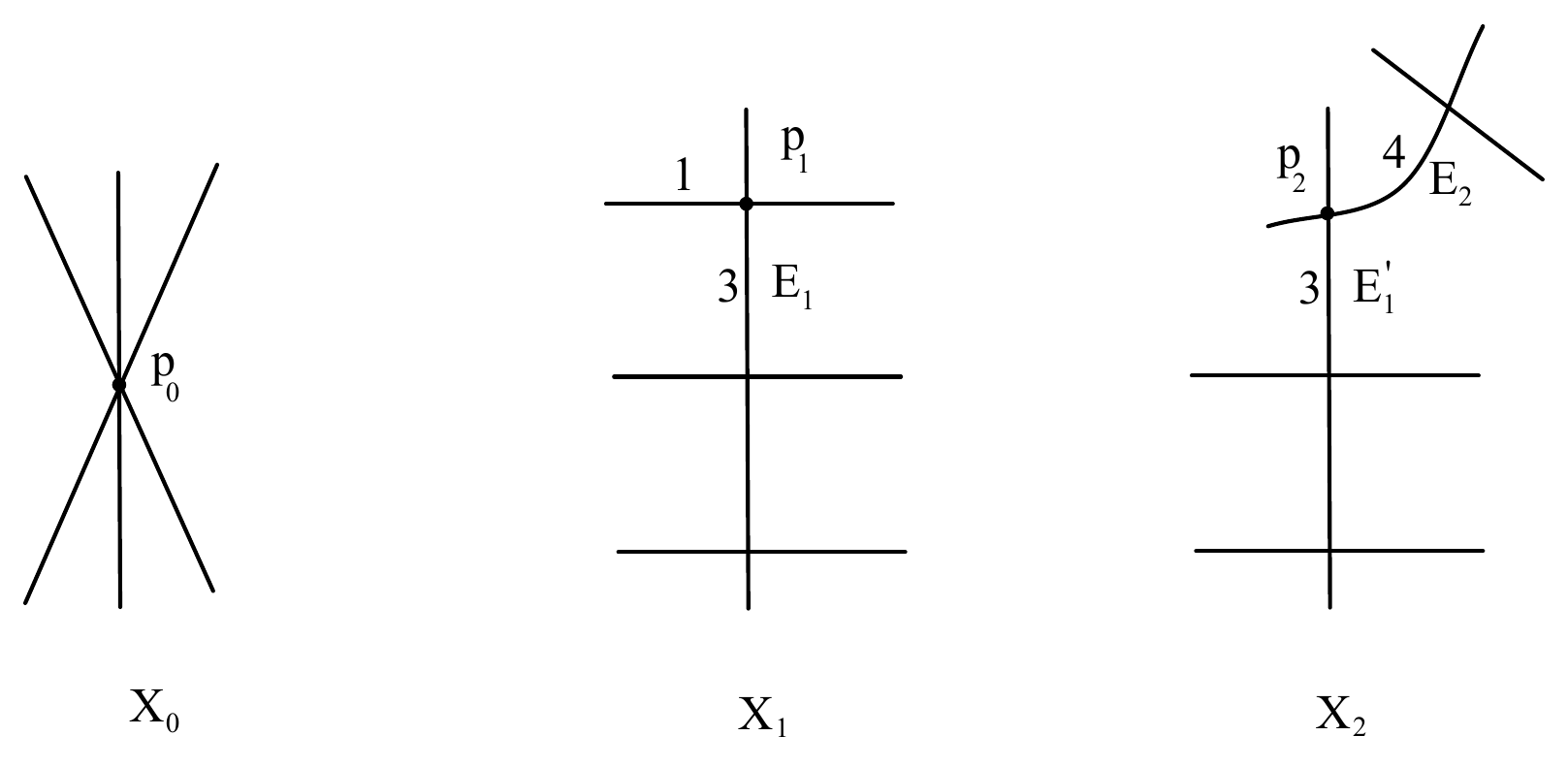}
\end{center}
\caption{The sequence of blow-ups in Proposition~\ref{exist}}\label{blow-up}
\end{figure}

We now want to relate the structure of the effective cone $\Eff(X)$ of a smooth rational surface $X$ with $\kappa(-K_X)=1$ with that of its relative minimal model $Y$.
Recall that the birational morphism $\pi: X\to Y$ induces an injective linear map $\pi^*:\Pic(Y)\otimes{\rr}\to\Pic(X)\otimes{\rr}$ which maps $\Eff(Y)$ into a linear section of $\Eff(X)$. Thus if $\Eff(X)$ is rational polyhedral, then the same is true for $\Eff(Y)$. We will show that the converse statement also holds.

\begin{lemma}\label{finite}
Let $Y$ be a smooth rational surface with $\kappa(-K_Y)=1$, $-K_Y$ nef and rational polyhedral $\Eff(Y)$. If $a,b$ are non-negative integers with $b\neq 0$, then there is a finite number of classes $[D]\in \Nef(Y)$ such that $D^2=a,\ -K_Y\cdot D=b$.
\end{lemma}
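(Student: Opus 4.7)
The plan is to show that the real set
\[
S_\rr := \{D \in \Nef(Y) : D^2 = a,\ -K_Y \cdot D = b\} \subset \Pic(Y)_\rr
\]
is bounded; since $\Pic(Y)$ is a discrete lattice in $\Pic(Y)_\rr$, boundedness of $S_\rr$ immediately yields the desired finiteness.

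First, $K_Y^2=0$: since $-K_Y$ is nef, the negative part of its Zariski decomposition is zero, and Proposition~\ref{-K} gives $-K_Y\sim aC$ with $C^2=0$. So $-K_Y$ is a nonzero isotropic class and, by the Hodge index theorem, the intersection form on $(-K_Y)^\perp$ is negative semidefinite with one-dimensional kernel $\rr(-K_Y)$. The key structural input is the identification of the face $F := \{D \in \Nef(Y) : -K_Y \cdot D = 0\}$ of $\Nef(Y)$ as the single ray $\rr_{\geq 0}(-K_Y)$: any $D\in F$ has $D^2\geq 0$ (a nef divisor on a surface has non-negative self-intersection) and $D^2\leq 0$ (since $D\in (-K_Y)^\perp$), so $D^2=0$ forces $D\in\rr(-K_Y)$, and nefness fixes the sign.

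Because $\Eff(Y)$ is rational polyhedral, so is the dual cone $\Nef(Y)$. Hence the slice $\sigma := \{D \in \Nef(Y) : -K_Y \cdot D = b\}$ is a rational polyhedron, and a direct check shows its recession cone is exactly $F=\rr_{\geq 0}(-K_Y)$. By the structure theorem for polyhedra, $\sigma = P + \rr_{\geq 0}(-K_Y)$ for a bounded rational polytope $P$. Given $D\in S_\rr$, write $D = p + s(-K_Y)$ with $p\in P$ and $s\geq 0$; using $K_Y^2=0$ and $-K_Y\cdot p = b$ one computes $D^2 = p^2 + 2sb$, so $D^2 = a$ pins down $s = (a - p^2)/(2b)$. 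Since $p^2$ is bounded on the compact polytope $P$, $s$ is bounded, and hence so is $D$, as required.

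The main obstacle is the clean identification $F = \rr_{\geq 0}(-K_Y)$, obtained by clashing the two competing constraints on $D^2$ (nefness forces it $\geq 0$, Hodge index in $(-K_Y)^\perp$ forces it $\leq 0$). Once this is in hand, the polyhedral structure theorem turns $D^2 = a$ into the linear equation $s = (a - p^2)/(2b)$ in the single ``unbounded direction'' $s$, pinning $D$ down from the bounded data $p \in P$; the hypothesis $b\neq 0$ is precisely what makes this equation solvable for $s$.
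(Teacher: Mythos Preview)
Your proof is correct and takes a genuinely different route from the paper's. The paper argues by choosing an explicit basis of $\Pic(Y)_\qq$ adapted to the elliptic fibration $\varphi_{|-mK_Y|}$: the fiber class $f$, an $m$-section $s$, and eight components $f_1,\dots,f_8$ of reducible fibers whose intersection matrix is negative definite (the existence of these eight curves is pulled from Proposition~\ref{class}(ii)). Writing $[D]=\alpha f+\sum\alpha_i f_i+\beta s$, the condition $-K_Y\cdot D=b$ fixes $\beta$, nefness against the effective classes $f-f_i$ bounds each $D\cdot f_i$, the nonsingular matrix of the $f_i$ then pins down the $\alpha_i$, and $D^2=a$ finally determines $\alpha$.

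Your argument replaces this explicit coordinate computation with a clean convex-geometric picture: the Hodge index theorem forces the face $\Nef(Y)\cap K_Y^\perp$ to be the single ray $\rr_{\geq 0}(-K_Y)$, so the affine slice $\{-K_Y\cdot D=b\}$ of the polyhedral cone $\Nef(Y)$ decomposes as a polytope plus that ray, and $D^2=a$ bounds the coordinate along the ray. This is more conceptual, avoids the forward reference to Proposition~\ref{class}, and would apply verbatim to any surface with polyhedral nef cone admitting a nonzero nef isotropic class. The paper's approach, on the other hand, is more hands-on and yields explicit bounds in terms of the fibration data.
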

\begin{proof}
Observe that $Y$ has a minimal elliptic fibration $\varphi=\varphi_{|-mK_Y|}$ for some $m>0$ by~\cite[Prop. 5.6.1]{cd}.
Since the effective cone of $Y$ is rational polyhedral, then 
 
there are $8$ components of reducible fibers of $\varphi$ whose intersection matrix $M$ is negative definite (see Proposition~\ref{class}(ii) below). Let $f_1,\dots,f_8$ be the classes of such curves, $f$ be the class of a fiber of $\varphi$ and  $s$ be the class of a $m$-section of $\varphi$ so that $f\cdot s=m$.
Observe that the lattice $L:=\langle f,s,f_1,\dots,f_8\rangle$ has rank $10$, so that it has finite index $k$ in $\Pic(Y)$. 
Let 
\[
[D]=\alpha f+\sum_{i=1}^{8} \alpha_i f_i+\beta s\in \Pic(Y)
\]
be a nef class with $-K_Y\cdot D=b$ and $D^2=a$. Thus $b=-K_Y\cdot D=\frac{1}{m}f\cdot D=\beta$.

Since $D$ is nef and $f-f_i$ is an effective class, then $b_i= D\cdot f_i\leq D\cdot f=b$. Observe that, since $[kD]\in \Pic(Y)$, then the coefficients of $[D]$ are rational with bounded denominators. Hence $b_i$ can take a finite number of non-negative rational values. For any such choice of the $b_i$'s, the coefficients $\alpha_i$ are uniquely determined since the intersection matrix $M$ of the $f_i$'s is non singular.
Finally, the condition $D^2=a$ determines $\alpha$, since $D^2=-b^2+(\sum_i\alpha_if_i)^2+2b \sum_i\alpha_i f_i\cdot s+2\alpha b m$.
This proves that there is a finite number of classes $[D]$ as in the statement.
\end{proof}

\begin{theorem}\label{effk1}
Let $X$ be a smooth rational surface with $\kappa(-K_X)=1$ and such that its relative minimal model $Y$ has rational polyhedral effective cone. Then $\Eff(X)$ is rational polyhedral.
\end{theorem}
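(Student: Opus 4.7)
My plan is to reduce, via Theorem \ref{equiv}, to showing that $X$ contains only finitely many $(-1)$-curves. I will factor $\pi\colon X\to Y$ as a composition of blow-ups at points and let $e_1,\dots,e_N\in\Pic(X)$ denote the classes of the total transforms of the successive exceptional divisors, so that $\Pic(X)=\pi^*\Pic(Y)\oplus\bigoplus_n\zz e_n$ is an orthogonal direct sum with $e_n^2=-1$, and $K_X=\pi^*K_Y+\sum_n e_n$. The $(-1)$-curves contracted by $\pi$ appear among the finitely many components of the exceptional locus, so I focus on a $(-1)$-curve $E$ not contracted by $\pi$ and set $D:=\pi_*E$, an integral curve on $Y$. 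Writing $E=\pi^*D-\sum_n c_n e_n$ with $c_n:=E\cdot e_n\ge 0$ (which holds because $E$ is not contained in the support of any $e_n$), the conditions $E^2=-1$ and $-K_X\cdot E=1$ translate into
\[
D^2=\textstyle\sum_n c_n^2-1\qquad\text{and}\qquad -K_Y\cdot D=1+\textstyle\sum_n c_n.
\]

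The proof then splits according to the sign of $D^2$. If $D^2<0$, then $[D]\in\Exc(Y)$, which is finite because $\Eff(Y)$ is rational polyhedral (Proposition \ref{eff-rays}); the second identity above fixes $\sum c_n$ in terms of $[D]$, so only finitely many tuples $(c_n)$ of non-negative integers with this sum occur, yielding finitely many classes $[E]$ in this subcase.

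The essential case is $D^2\ge 0$: here the irreducible curve $D$ is automatically nef on $Y$, and I aim to invoke Lemma \ref{finite} after bounding both $D^2$ and $-K_Y\cdot D$. For the upper bound on $-K_Y\cdot D$ I use the elliptic fibration structure: write the Zariski decomposition $-K_X\sim N+aC$ on $X$ as in Proposition \ref{-K}. If $E$ is not a component of $N$ (only finitely many such), then $N\cdot E\ge 0$ and hence $a(C\cdot E)\le -K_X\cdot E=1$. Since the integral fiber class on $X$ is $[C]=\pi^*[C_Y]$, where $C_Y$ is the integral fiber class on $Y$, the projection formula gives $C_Y\cdot D=C\cdot E\le 1/a$. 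Applying Proposition \ref{-K} to $Y$ yields $-K_Y\sim a_YC_Y$, so $-K_Y\cdot D\le a_Y/a$; combined with $\sum c_n=-K_Y\cdot D-1$ and the elementary bound $\sum c_n^2\le(\sum c_n)^2$ for non-negative integers, this forces $D^2$ and $-K_Y\cdot D$ to take only finitely many values. Lemma \ref{finite} then produces finitely many admissible nef classes $[D]$, and since $[E]$ is determined by the pair $([D],(c_n))$ through the orthogonal decomposition, while a $(-1)$-curve is recovered uniquely from its class, there are only finitely many $(-1)$-curves in this subcase.

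The main obstacle is obtaining the upper bound $-K_Y\cdot D\le a_Y/a$: the companion lower bound $-K_Y\cdot D\ge 1$ is immediate, but without exploiting that the fiber class on $X$ is pulled back from $Y$, one has no a priori control on $\sum c_n$ and cannot reduce to Lemma \ref{finite}. The relative minimal model structure together with the hypothesis $\kappa(-K_X)=1$ is exactly what makes this passage possible.
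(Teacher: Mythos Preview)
Your argument is correct and follows the same strategy as the paper: reduce via Theorem~\ref{equiv} to bounding the $(-1)$-curves on $X$, push a non-contracted $(-1)$-curve $E$ down to an integral curve $D$ on $Y$, use the Zariski decomposition $-K_X\sim N+aC$ together with $C=\pi^*C_Y$ to bound $-K_Y\cdot D$ (the paper phrases the same bound as $F\cdot E\le \tfrac{1}{\alpha}-1$ with $P\sim\alpha\,\pi^*(-K_Y)$, and $\alpha=a/a_Y$ in your notation), and then invoke Lemma~\ref{finite} for the nef case and the polyhedrality of $\Eff(Y)$ for the $D^2<0$ case. The only cosmetic point is that Proposition~\ref{eff-rays} does not literally assert that $\Exc(Y)$ is finite; you are using that each integral curve with negative self-intersection spans an extremal ray of $\Eff(Y)$, and a polyhedral cone has only finitely many such rays (alternatively cite Corollary~\ref{effneg} or Theorem~\ref{equiv} applied to $Y$).
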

\begin{proof}
By Theorem \ref{equiv} it is enough to prove that $X$ contains finitely many $(-1)$-curves.
Let $\pi:X\to Y$ be the blow-down map. 
We assume that $X$ is the blow-up of $Y$ at $r$ points, possibly infinitely near, and we call $E$ the exceptional divisor of $\pi$.  Observe that we can write $E=\sum_{i=1}^r c_i E_i$, where $c_i$ are positive integers and $E_i$ are curves (not necessarily integral) such that $E_i^2=-1$ and $E_i\cdot E_j=0$ for distinct $i, j$.

Let $F$ be a $(-1)$-curve of $X$ and let $-K_X\sim N+P$ be the Zariski decomposition. Observe that $P\sim\alpha\pi^*(-K_Y)$, where $\alpha$ is a rational number with $0<\alpha<1$ by Lemma~\ref{mu}.
Since $F\cdot (-K_X)=1$, then either $F$ is a component of $N$ or $F\cdot P=0$ or
$0< F\cdot P \leq 1$. 
In the first two cases $F$ belongs to a finite set of curves, so we assume to be in the third case. Then we have
\[
F\cdot E=F\cdot(\pi^*(-K_Y)+K_X)=F\cdot \pi^*(-K_Y)-1\leq \frac{1}{\alpha}-1.
\]
Observe that we can also assume that $F\cdot E_i\geq 0$ for each $i$, since otherwise $F$ would be a component of $E_i$ and again there is only a finite set of such components.
If $D=\pi(F)$, then we can write $F$ as
\[
F=\pi^*(D)-\sum_{i=1}^ra_iE_i
\]
with $a_i=F\cdot E_i\leq F\cdot E\leq \frac{1}{\alpha}-1$ so that
$D^2\leq -1+r(\frac{1}{\alpha}-1)^2$ since $F^2=-1$.
Moreover 
\[
0<-K_Y\cdot D=\frac{1}{\alpha} P\cdot (F+\sum_ia_iE_i)\leq \frac{1}{\alpha} (1+(\frac{1}{\alpha}-1)\sum_iP\cdot E_i).
\]
Observe that either $D$ is a $(-1)$-curve of $Y$ or $D^2\geq 0$ and $D$ is a nef divisor since it is integral. In the first case there are a finite number of such $D$ by 
Theorem~\ref{equiv}. In the second case we conclude by Lemma \ref{finite} since both $D^2$ and $-K_Y\cdot D$ are bounded.
This implies that $X$ contains finitely many $(-1)$-curves.
\end{proof}
\begin{corollary}\label{infty}
Let $n\geq 10$ be an integer. Then there exists a smooth rational surface $X$ with
$\kappa(-K_X)=1$, $\rho(X)=n$ and finitely generated Cox ring $R(X)$.
\end{corollary}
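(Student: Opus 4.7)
The plan is to combine Proposition~\ref{exist}, Theorem~\ref{effk1}, and Theorem~\ref{equiv}. First I would apply Proposition~\ref{exist} to produce, for each $n\geq 10$, a smooth rational surface $X$ with $\kappa(-K_X)=1$ and $\rho(X)=n$, realized as an iterated blow-up of the jacobian elliptic surface $X_0$ associated to the pencil $xy(x+y)+t(x^3+y^3+z^3)=0$ on $\pp^2$. Since every blow-up center is infinitely near to the triple point $p_0$ of the special fiber $P_0$, all the exceptional divisors produced sit inside the fiber of the induced anticanonical elliptic fibration on $X$ that contains the strict transform of $P_0$. Contracting the $(-1)$-curves in fibers of this fibration therefore recovers $X_0$, so the relative minimal model of $X$ is $Y\cong X_0$.

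Next I would verify that $\Eff(Y)$ is rational polyhedral. Since $Y$ itself satisfies $\kappa(-K_Y)=1$, Theorem~\ref{equiv} applied to $Y$ reduces this to showing that $Y$ contains only finitely many $(-1)$-curves. Because $Y$ is a jacobian elliptic surface, $-K_Y$ is linearly equivalent to a fiber, and adjunction identifies the $(-1)$-curves on $Y$ with the sections of the fibration $\varphi_{|-K_Y|}$. The finiteness condition therefore becomes finiteness of the Mordell-Weil group, which for the explicit pencil at hand can be checked by reading off the singular fiber types and applying Shioda-Tate.

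With $\Eff(Y)$ rational polyhedral, Theorem~\ref{effk1} yields that $\Eff(X)$ is rational polyhedral, and a second application of Theorem~\ref{equiv}, this time to $X$, gives that $R(X)$ is finitely generated. The main obstacle is the Mordell-Weil computation on $Y$; this is a concrete, classification-style check, which could in any case be sidestepped by starting Proposition~\ref{exist}'s construction from any jacobian elliptic surface carrying a Kodaira type~IV fiber and finite Mordell-Weil group (the existence of such surfaces being classical).
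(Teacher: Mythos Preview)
Your overall strategy---build $X$ via the construction of Proposition~\ref{exist}, identify its relative minimal model $Y$, check $\Eff(Y)$ is rational polyhedral, then apply Theorem~\ref{effk1} and Theorem~\ref{equiv}---is exactly the paper's. The one difference is the choice of $Y$. You take $Y=X_0$ to be the specific surface in the proof of Proposition~\ref{exist}, namely the blow-up of $\pp^2$ along the base locus of $xy(x+y)+t(x^3+y^3+z^3)$, and then face the task of computing its Mordell--Weil group; as you note, this is a genuine (and not entirely trivial) verification that you do not carry out. The paper avoids this step entirely: it does not use that particular pencil, but instead takes $Y$ to be a rational elliptic surface with four fibers of type $\tilde A_2$, whose effective cone is known to be rational polyhedral by~\cite[Ex.~1.4.1]{n1}; since such a $Y$ has a fiber with a triple point, the blow-up recipe of Proposition~\ref{exist} applies to it verbatim. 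Your proposed ``sidestep'' in the last paragraph---start from any jacobian elliptic surface with a type~IV fiber and finite Mordell--Weil group---is precisely this move, so once you make it your argument and the paper's coincide.
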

\begin{proof}
Let $Y$ be a minimal elliptic surface with $4$ fibers of Kodaira type $\tilde A_2$. Such a surface exists and its effective cone is rational polyhedral by~\cite[Ex.~1.4.1]{n1}.
Observe that $Y$ contains a fiber with a triple point, thus proceding as in the proof of Proposition~\ref{exist} we construct a surface $X$ of Picard number $n$ with $\kappa(-K_X)=1$ 
whose relative minimal model is $Y$.
Since $\Eff(Y)$ is rational polyhedral then $\Eff(X)$ is rational polyhedral by Theorem~\ref{effk1}.
We now conclude by Theorem~\ref{equiv}. 
\end{proof}

\section{Smooth projective surfaces with $-K_X$ nef}
In this section we consider smooth projective 
surfaces $X$ such that $q(X)=0$, $\Eff(X)$ is rational polyhedral 
and $-K_X$ is nef. Observe that, since $-K_X$ is nef, 
then $K_X^2\geq 0$ and the integral curves with 
negative self-intersection are 
either $(-1)$ or $(-2)$-curves.
If $X$ is minimal, i.e. it does not contain 
$(-1)$-curves, then either $\rho(X)\leq 2$ or $K\equiv 0$ by 
Proposition \ref{eff-rays}.
In the first case, $X$ is either $\pp^2$ or a 
Hirzebruch surface $\mathbb{F}_n$ with $n=0,2$. 
In the second case, $X$ is either a K3 surface or 
an Enriques surface. In~\cite[Thm.~2.7, Thm.~2.10]{ahl} it is proved that
 the Cox ring of these surfaces is finitely generated 
if and only if $\Eff(X)$ is rational polyhedral. 
Moreover, the Picard lattices of those $X$ which 
admit a rational polyhedral effective cone are 
classified in a series of papers by Nikulin and Kond\=o (see ~\cite[Ex.~1.4.1]{n1} for precise references).

If $X$ is non-minimal, then it is rational and it is 
either a Hirzebruch surface $\mathbb{F}_1$ or one 
of the surfaces described in the following 
(for the proof see~\cite[Ex.~1.4.1]{n1}).
\begin{proposition}\label{class}
Let $X$ be a smooth rational surface with $-K_X$ 
nef and $\rho(X)\geq 3$. Then $\Eff(X)$ is rational 
polyhedral if and only if one of the following holds:
\begin{enumerate}
\item $K_X^2>0$ and $X$ is the minimal resolution 
of singularities of a Del Pezzo surface with Du Val 
singularities;
\item $K_X^2=0$ and any connected component 
of the set of $(-2)$-curves of $X$ is an extended 
Dynkin diagram of rank $r_i$ with $\sum_i r_i=8$. 
\end{enumerate}
\end{proposition}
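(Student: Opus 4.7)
The plan is to reduce the rational polyhedrality of $\Eff(X)$ to a finiteness statement via Proposition~\ref{eff-rays}. Since $-K_X$ is nef, adjunction forces every integral curve $C$ with $C^2<0$ to satisfy $C^2\in\{-1,-2\}$; hence $\Eff(X)$ is rational polyhedral if and only if $X$ contains only finitely many $(-1)$- and $(-2)$-curves. Because $-K_X$ is nef, $K_X^2\geq 0$, and the proof splits on whether $K_X^2>0$ or $K_X^2=0$.

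For case (i), I would first observe that $-K_X$ being nef and big forces $K_X^\perp\subset\Pic(X)_\rr$ to be negative definite by Hodge index. Adjunction places every $(-2)$-curve in $K_X^\perp$, and a negative definite lattice has only finitely many vectors of square $-2$; thus the $(-2)$-configuration is finite with ADE connected components. I would then apply Artin contractibility, as in Lemma~\ref{nef+big}, to this configuration to obtain a birational morphism $\psi:X\to Y$ onto a normal projective surface with Du Val singularities. Since $-K_X$ is strictly positive on every integral curve not contracted by $\psi$, Nakai-Moishezon gives that the descent $-K_Y$ is ample, so $Y$ is a Del Pezzo with Du Val singularities. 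Finiteness of the $(-1)$-curves on $X$ then follows by bounding the images under $\psi$ in anticanonical degree and self-intersection, invoking the classical Del Pezzo finiteness. The converse direction is immediate.

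For case (ii), Noether's formula applied to the rational surface $X$ with $K_X^2=0$ forces $\rho(X)=10$. The critical input is the classical theorem that some multiple $|-mK_X|$ is base-point free and defines an elliptic fibration $\varphi:X\to\pp^1$ (see~\cite{cd}). Every $(-2)$-curve $C$ satisfies $C\cdot(-mK_X)=0$ and so lies in a fiber of $\varphi$, and Kodaira's classification identifies the connected components of the $(-2)$-configuration with extended Dynkin diagrams of types $\tilde A_n,\tilde D_n,\tilde E_6,\tilde E_7,\tilde E_8$ of ranks $r_i$. The Shioda--Tate formula $\rho(X)=2+\sum_i r_i+r_{\mathrm{MW}}$ then shows $\sum_i r_i=8$ if and only if the Mordell-Weil group of the Jacobian fibration is finite, which via Ogg-Shafarevich theory is equivalent to the finiteness of the set of $(-1)$-curves on $X$.

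The main obstacles are the existence of the elliptic fibration in the $K_X^2=0$ case, and the precise identification of $(-1)$-curves with Mordell-Weil elements in the non-Jacobian setting. These are exactly the points for which the paper defers to~\cite{cd,o,sh,n1}.
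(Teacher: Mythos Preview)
The paper does not actually give its own proof of this proposition: it states the result and refers to Nikulin~\cite[Ex.~1.4.1]{n1} for the argument. So there is no detailed proof in the paper to compare against; your outline is considerably more explicit than what the paper supplies.

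Your treatment of case~(i) is sound and essentially the standard argument.

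There is, however, a genuine gap in your case~(ii). You assert as ``critical input'' that for some $m>0$ the linear system $|-mK_X|$ is base-point free and defines an elliptic fibration, citing~\cite{cd}. But this requires $\kappa(-K_X)\ge 1$, which is \emph{not} automatic from $-K_X$ nef and $K_X^2=0$. Indeed, in the paragraph immediately following Proposition~\ref{class} the paper explicitly splits the surfaces in~(ii) into two families: those with $\kappa(-K_X)=1$, where your anticanonical fibration exists, and three exceptional surfaces with $\kappa(-K_X)=0$, for which $h^0(-mK_X)=1$ for every $m>0$ and no such fibration arises. For these three surfaces your Kodaira/Shioda--Tate argument does not even get started: the $(-2)$-curves are not fibral for any $|-mK_X|$, and there is no Mordell--Weil group to invoke. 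Your closing remark that ``the existence of the elliptic fibration'' is one of the obstacles the paper delegates to the references is therefore slightly off: the paper delegates the \emph{whole} classification (including the non-fibred cases) to Nikulin, not just the fibration existence, and~\cite{cd} does not supply what you need for $\kappa(-K_X)=0$. To repair the argument you would have to treat the $\kappa(-K_X)=0$ situation separately, which is precisely the delicate part handled in~\cite{n1}.
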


The surfaces in Proposition~\ref{class} (ii)  
 have been further classified and divided in two classes.

If $\kappa(-K_X)=1$, then $\varphi_{|-mK_X|}$
is an elliptic fibration for some $m>0$.
Then (ii) is equivalent to ask that the Jacobian fibration 
of $\varphi_{|-mK_X|}$ has finite Mordell-Weil group.
In case $m=1$ the reducible fibers and the Mordell-Weil groups of 
such surfaces have been classified in \cite{d,cd}.

If $\kappa(-K_X)=0$, then $|-mK_X|$ is zero-dimensional 
for all positive $m$. There are exactly three families of
such surfaces, which have been classified in~\cite[Ex.~1.4.1]{n1}.  
 
 \begin{remark} If $X$ is a smooth rational surface  
as in Proposition~\ref{class}, then $\Eff(X)$ is generated 
by $(-1)$ and $(-2)$-curves by Proposition \ref{eff-rays}. 
In particular, if $\varphi=\varphi_{|-K_X|}$ is an elliptic fibration, 
then these curves will be the sections and the components 
of reducible fibers of $\varphi$ respectively.
 \end{remark}
\begin{theorem}\label{nfg}
Let $X$ be a smooth projective surface with $q(X)=0$ and $-K_X$ 
nef. Then $R(X)$ is finitely generated
if and only if one of the following holds:
\begin{enumerate}
\item  $X$ is the minimal resolution 
of singularities of a Del Pezzo surface with Du Val 
singularities;
\item  $\varphi_{|-mK_X|}$ is an elliptic fibration for some $m>0$ and the Mordell-Weil group of the Jacobian fibration of $\varphi_{|-mK_X|}$ is finite;
\item $X$ is either a K3-surface or an Enriques surface with finite automorphism group $\Aut(X)$.
\end{enumerate}
\end{theorem}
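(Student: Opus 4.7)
The plan is to apply the criterion of \cite[Cor.~2.6]{ahl}, according to which $R(X)$ is finitely generated if and only if both $\Eff(X)$ and $\SAmple(X)$ are rational polyhedral, and to combine it with Proposition~\ref{class} (which classifies the rational non-minimal situation where $\Eff(X)$ is rational polyhedral) and Theorem~\ref{nef-big} (which yields $\Nef(X)=\SAmple(X)$ whenever $\kappa(-K_X)\geq 1$). Together these reduce finite generation, in the relevant rational cases, to the rational polyhedrality of $\Eff(X)$.

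For the ``if'' direction I argue case by case. In (i), $X$ is a weak Del Pezzo surface, so $-K_X$ is big and nef, $\kappa(-K_X)=2$, $\Eff(X)$ is rational polyhedral by Proposition~\ref{class}(i), and Theorem~\ref{nef-big} gives $\Nef(X)=\SAmple(X)$; hence \cite[Cor.~2.6]{ahl} applies. In (ii), $\kappa(-K_X)=1$ and the finiteness of the Mordell-Weil group of the Jacobian of $\varphi_{|-mK_X|}$ is, via the Shioda-Tate formula, equivalent to the $(-2)$-curves of $X$ decomposing into extended Dynkin diagrams of total rank $8$, i.e.\ to Proposition~\ref{class}(ii); one concludes exactly as in (i). Case (iii) is the content of \cite[Thm.~2.7, Thm.~2.10]{ahl}, which assert that for a K3 or an Enriques surface $R(X)$ is finitely generated if and only if $\Eff(X)$ is rational polyhedral if and only if $\Aut(X)$ is finite.

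For the ``only if'' direction, assume $R(X)$ is finitely generated, so $\Eff(X)$ is rational polyhedral. Suppose first that $X$ is minimal. By the dichotomy recalled at the beginning of Section~5 (using that $-K_X$ nef and adjunction force integral negative-self-intersection curves to be $(-1)$- or $(-2)$-curves, and that a minimal surface has none of the former), $X$ is either $\pp^2$, $\mathbb{F}_0$, $\mathbb{F}_2$, a K3 surface or an Enriques surface; the first three are minimal resolutions of Del Pezzo surfaces with at worst Du Val singularities (namely $\pp^2$, $\pp^1\times\pp^1$ and the quadric cone), so (i) holds, while for K3 and Enriques the converses in \cite[Thm.~2.7, Thm.~2.10]{ahl} give finite $\Aut(X)$, so (iii) holds. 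Suppose now that $X$ is not minimal; it then carries a $(-1)$-curve, and since $q(X)=0$, Castelnuovo's criterion makes $X$ rational. Proposition~\ref{class} leaves two possibilities: its alternative (i), which is our (i), and its alternative (ii), where $K_X^2=0$ and $\kappa(-K_X)\in\{0,1\}$. When $\kappa(-K_X)=1$, $\varphi_{|-mK_X|}$ is an elliptic fibration and Proposition~\ref{class}(ii) is equivalent to finiteness of the Mordell-Weil group of its Jacobian, which is our (ii). When $\kappa(-K_X)=0$, the Remark following Theorem~\ref{nef-big}, applied to the Zariski decomposition $-K_X=P+N$ with $P=-K_X$ nontrivial and $N=0$, forces $\Nef(X)\neq\SAmple(X)$, contradicting the assumed finite generation.

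The main obstacle is the K3/Enriques case (iii), since the equivalence with finiteness of $\Aut(X)$ rests on the deep classification of surfaces of trivial canonical class with rational polyhedral effective cone by Nikulin, Kond\=o and Vinberg; for the present purpose this is packaged into \cite[Thm.~2.7, Thm.~2.10]{ahl} and invoked as a black box. A secondary but still delicate point is matching the combinatorial condition in Proposition~\ref{class}(ii) with the Mordell-Weil condition of (ii) of the theorem, which is the standard Shioda-Tate analysis of the reducible fibres of $\varphi_{|-mK_X|}$.
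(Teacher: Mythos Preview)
Your proof is correct and follows essentially the same strategy as the paper: a case analysis on the Enriques--Kodaira type of $X$ (equivalently, on whether $K_X\equiv 0$, and when it is not, on $K_X^2$ and $\kappa(-K_X)$), reducing each case to \cite[Cor.~2.6]{ahl} together with the paper's own structural results. The only differences are cosmetic: the paper does a single unified case split rather than separating the two implications; it quotes \cite[Thm.~2.9]{tvv} directly for the big case $K_X^2>0$ instead of rederiving it from Proposition~\ref{class}(i) and Theorem~\ref{nef-big}; it cites Theorem~\ref{equiv} for the $\kappa(-K_X)=1$ case rather than reassembling it from Proposition~\ref{class}(ii) and Theorem~\ref{nef-big}; and for $\kappa(-K_X)=0$ it argues directly that $-K_X$ is nef but not semiample (since $h^0(-mK_X)=1$ for all $m>0$) rather than invoking the Remark after Theorem~\ref{nef-big}. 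One small point: Proposition~\ref{class} is stated only for $\rho(X)\geq 3$, so strictly speaking you should dispose of $\pp^2,\mathbb{F}_0,\mathbb{F}_1,\mathbb{F}_2$ separately (they are all toric, hence trivially in case~(i)), as the paper does.
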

\begin{proof}
 If $K_X\equiv 0$, then $X$ is either a K3 or an Enriques surface and we conclude by~\cite[Thm.~2.7, Thm.~2.10]{ahl}.
If $K_X\not\equiv 0$ and $\rho(X)\leq 2$, then $X$ is either $\pp^2$ or a Hirzebruch surface $\mathbb{F}_0$, $\mathbb{F}_2$. In all these cases $X$ is in (i).

Assume now that $K_X\not\equiv 0$ and  $\rho(X)\geq 3$.
If $K_X^2>0$, then $\kappa(-K_X)=2$, 
so that $R(X)$ is finitely generated by~\cite[Thm.~2.9]{tvv}.
If $K_X^2=0$ and $\kappa(-K_X)=1$ then, by Proposition~\ref{class} and Theorem~\ref{equiv}, $R(X)$ is finitely generated if and only if we are in (ii).
If $K_X^2=0$ and  $\kappa(-K_X)=0$, then $-K_X$ is nef but not semiample since 
$h^0(-mK_X)= 1$ for any $m>0$. Thus $R(X)$ is not finitely generated by~\cite[Prop.~2.9]{hk} or \cite[Corollary 2.6]{ahl}. 
\end{proof}

\begin{corollary}\label{corell}
Let $X$ be a smooth rational surface such that 
$\varphi_{|-K_X|}: X\to\pp^1$ is an elliptic fibration. 
Then $R(X)$ is finitely generated if and only if the 
Mordell-Weil group of $\varphi_{|-K_X|}$ is finite.
\end{corollary}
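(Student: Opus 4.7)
The plan is to deduce the corollary from Theorem~\ref{nfg} by narrowing down which of the three cases (i)--(iii) can apply.

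First I would verify the hypotheses of Theorem~\ref{nfg}. Rationality of $X$ gives $q(X)=0$. The map $\varphi_{|-K_X|}$ being a morphism forces $|-K_X|$ to be basepoint-free, so $-K_X$ is semiample and in particular nef. Thus Theorem~\ref{nfg} applies and tells us that $R(X)$ is finitely generated if and only if one of (i)--(iii) holds. Now (iii) is ruled out by rationality, since a K3 or Enriques surface is not rational. For (i), note that the elliptic fibration $\varphi_{|-K_X|}$ has one-dimensional image, so $\kappa(-K_X)=1$; equivalently, a general fiber $F\sim -K_X$ satisfies $F^2=0$, giving $K_X^2=0$. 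This contradicts the requirement $K_X^2>0$ that holds for a minimal resolution of a Del Pezzo surface with Du Val singularities, so (i) is also excluded.

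Hence $R(X)$ is finitely generated if and only if case (ii) of Theorem~\ref{nfg} holds. Since $\varphi_{|-K_X|}$ itself is already an elliptic fibration, the value $m=1$ is admissible in (ii), and the condition becomes: the Mordell-Weil group of the Jacobian fibration of $\varphi_{|-K_X|}$ is finite. The remaining task is to identify this group with the Mordell-Weil group of $\varphi_{|-K_X|}$ itself, as stated in the corollary. Concretely, one must show that $\varphi_{|-K_X|}$ admits a section, for then $X$ coincides with its own Jacobian fibration.

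The main point of care is exactly this last identification. Since $h^0(-K_X)=2$, the surface $X$ is a jacobian elliptic surface in the sense of the paper (the value $m=1$ in the discussion following Theorem~\ref{equiv}), and this terminology is consistent with the classical notion thanks to Kodaira's canonical bundle formula for a relatively minimal rational elliptic fibration: the presence of any multiple fiber $F_i^*$ of multiplicity $m_i>1$ would yield $-K_X\sim f^*[\mathrm{pt}]-\sum(m_i-1)F_i^*$, which by an easy computation using $m_iF_i^*\sim f^*[\mathrm{pt}]$ forces $h^0(-K_X)=1$, contradicting $h^0(-K_X)=2$. Thus $\varphi_{|-K_X|}$ has no multiple fibers, and for a rational elliptic fibration this is equivalent to the existence of a section. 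Once a section is in hand, the Jacobian fibration of $\varphi_{|-K_X|}$ is $\varphi_{|-K_X|}$ itself and the two Mordell-Weil groups coincide, completing the proof.
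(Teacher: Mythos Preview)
Your proposal is correct and follows the intended route: the paper states Corollary~\ref{corell} without proof, treating it as an immediate specialization of Theorem~\ref{nfg} to the case $m=1$, and that is exactly what you do. You are in fact more careful than the paper on one point: Theorem~\ref{nfg}(ii) refers to the Mordell-Weil group of the \emph{Jacobian} fibration, whereas the corollary speaks of the Mordell-Weil group of $\varphi_{|-K_X|}$ itself, and you correctly justify their identification by showing (via the canonical bundle formula) that $h^0(-K_X)=2$ forces the absence of multiple fibers and hence the existence of a section---a step the paper absorbs into its earlier remark that $m=1$ makes the surface ``jacobian.'' One small addition worth making explicit: the canonical bundle formula requires relative minimality, which is automatic here since any $(-1)$-curve $E$ satisfies $E\cdot(-K_X)=1$ and therefore cannot lie in a fiber.
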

 
\begin{remark}\label{non-fg}
\cite[Prop.~2.9]{hk} or Theorem~\ref{nfg} and \cite[Ex.~1.4.1]{n1} provide a negative answer 
to~\cite[Question I.3.9]{har}, since they show that 
there are rational surfaces such that the effective 
cone is rational polyhedral but the Cox ring is not 
finitely generated.
\end{remark}

\section{An example with $\kappa(-K_X)=-\infty$}

In this section we construct a surface $X$ with $\rho(X)=2$ such that $\Eff(X)$ is rational polyhedral but the Cox ring $R(X)$ is not finitely generated. The surface $X$ will be the blow-up of a smooth, very general quartic $S\subset\pp^3$ at a very general point $p$. In particular $\kappa(-K_X)=-\infty$. 
If $\pi: X\to S$ is the blow-up at $p$ with exceptional divisor $E$ and $H=\pi^*\Osh_{S}(1)$, we will show that: 
\[
\Eff(X)=\langle [E],[H-2E]\rangle,\quad
\Nef(X)=\langle [H],~ [H-2E]\rangle,\quad [H-2E]\not\in\SAmple(X),
\]
since $h^0(m(H-2E))=1$ for any $m\geq 1$.
This implies, by~\cite[Proposition 2.9]{hk} or~\cite[Corollary 2.6]{ahl}, that $R(X)$ is not finitely generated.  
\begin{remark} In the given example it is possible to prove directly, without using~\cite[Proposition 2.9]{hk}, that $R(X)$ is not finitely generated.
To any $x\in H^0(D)$ we associate its {\em degree}:
\[
[x]:=[D]\in\Pic(X).
\]
Assume that $R(X)$ is finitely generated, so that there exists $[D]\in\Nef(X)$ such that the degree $[x]$ of any generator either lives in the cone $\langle [D],[E]\rangle$ or is equal to $[H-2E]$. A class $[D']$ as in the picture is ample, since it lives in the interior of the nef cone, so that there is a section $x'\in H^0(nD')$ which is not divisible by $z\in H^0(H-2E)$, for $n$ big enough. Thus $[x']$ is a non-negative linear combination of the degrees of the generators of $R(X)$ distinct from $z$. This means that $[nD']=[x']$ lives in the cone $\langle [D],[E]\rangle$, which is a contradiction.
\end{remark}
\begin{center}
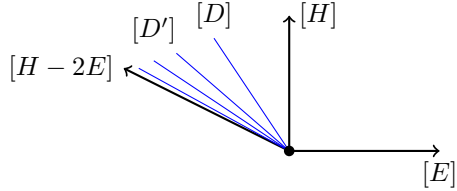
\begin{figure}[h]
\begin{tikzpicture}[scale=1]
\begin{scope}
  \draw[->,thick] (0,0) -- (2,0) node[below]{$[E]$};
  \draw[->,thick] (0,0) -- (-2.2,1.1) node[left] {$[H-2E]$};
  \draw[->,thick] (0,0) -- (0,1.8) node[right]{$[H]$};
  \foreach \x/\y in {-1/1.5,-1.5/1.3,-1.8/1.2,-2/1.1} \draw[-,color=blue] (0,0) to (\x,\y);
  \fill[black] (0,0) circle (2pt);
  \node[above] at (-1,1.5) {$[D]$};
  \node[above] at (-1.8,1.3) {$[D']$};
\end{scope}
\end{tikzpicture}
\caption{Each blue ray provides new generators of $R(X)$.}
\end{figure}
\end{center}
If $S\subset \pp^3$ is a smooth quartic surface and $p\in S$ we denote by $\pi:C\to S\cap T_pS$ the normalization at $p$ of the hyperplane section $S\cap T_pS$. Observe that, for general $p$,  $S\cap T_pS$ has a node at $p$ and $C$ is a smooth genus two curve.
\begin{lemma}\label{tor}
There exists a smooth quartic surface $S\subset\pp^3$ with $\rho(S)=1$ and a point $p\in S$ such that  $C$ is smooth of genus two and $K_{C}-q_1-q_2$ is not a torsion point of $J(C)$, where $\{q_1,q_2\}=\pi^{-1}(p)$.
\end{lemma}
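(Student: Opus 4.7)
The plan is to verify the three requirements---$\rho(S)=1$, smoothness of $C$ of genus two, and non-torsionness of $K_C-q_1-q_2$---by a general-position argument on pairs $(S,p)$. The first condition is very general: by the Noether--Lefschetz theorem, the locus $U_1=\{S:\rho(S)=1\}$ in the parameter space $U$ of smooth quartics is the complement of a countable union of proper analytic subvarieties, hence dense. The second is Zariski open on the incidence variety $\mathcal{I}=\{(S,p):p\in S\}\subset U\times\pp^3$: when $S$ contains no lines (automatic if $\rho(S)=1$) and $p$ avoids the finitely many parabolic points of $S$, the tangent hyperplane $T_pS$ meets $S$ in a plane quartic with a single ordinary node at $p$ and no other singularity, so its normalization $C$ is smooth of genus two; the labeling of the two preimages $\{q_1,q_2\}$ is well-defined after an \'etale double cover of $\mathcal{I}$.

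For the third condition, let $\mathcal{I}^\circ\subset\mathcal{I}$ be the open subset where the first two hold, and consider the smooth family $\mathcal{C}\to\mathcal{I}^\circ$ of genus-two curves with two disjoint sections $q_1,q_2$, together with its relative Jacobian $\mathcal{J}\to\mathcal{I}^\circ$. The rule $(S,p)\mapsto[K_C-q_1-q_2]$ is an algebraic section $s\colon\mathcal{I}^\circ\to\mathcal{J}$. For every $n\geq 1$ the $n$-torsion subscheme $\mathcal{J}[n]\subset\mathcal{J}$ is \'etale over $\mathcal{I}^\circ$, hence $s^{-1}(\mathcal{J}[n])$ is Zariski closed in $\mathcal{I}^\circ$. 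The set of pairs for which $K_C-q_1-q_2$ is torsion is thus the countable union $\bigcup_{n\geq 1}s^{-1}(\mathcal{J}[n])$; provided each term is a proper subvariety, its complement meets $U_1\cap\mathcal{I}^\circ$ and any pair in the intersection proves the lemma.

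The main obstacle is therefore showing $s^{-1}(\mathcal{J}[n])\neq\mathcal{I}^\circ$ for every $n\geq 1$. The cleanest route is via dominance onto the moduli of pointed curves: let $\mu\colon\mathcal{I}^\circ\to\mathcal{M}_{2,2}$ send $(S,p)$ to $(C,q_1,q_2)$. A dimension count shows $\mu$ is dominant, since modulo the $15$-dimensional action of $\Aut(\pp^3)$ the source has dimension $34-15+2=21$, while $\dim\mathcal{M}_{2,2}=5$. On $\mathcal{M}_{2,2}$ the Abel--Jacobi map from the second symmetric product $C^{(2)}$ onto $\Pic^2(C)$ is surjective outside a theta divisor, so fixing a very general genus-two curve $C$ and varying $(q_1,q_2)$, the class $[K_C-q_1-q_2]\in J(C)$ sweeps out every non-torsion point of $J(C)$. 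Hence the corresponding section over $\mathcal{M}_{2,2}$ is non-torsion at a very general point, and pulling back along $\mu$ gives non-torsion values of $s$ on a dense open subset of $\mathcal{I}^\circ$, completing the argument.
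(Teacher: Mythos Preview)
Your overall strategy---combining Noether--Lefschetz with a countability argument on torsion sections of the relative Jacobian---is sound and close in spirit to the paper's. However, there is a genuine gap: the assertion that ``a dimension count shows $\mu$ is dominant'' is not a proof. The inequality $21>5$ only says the source is large enough; it does not exclude the possibility that the image of $\mu$ lies in a proper subvariety of $\mathcal{M}_{2,2}$. Without dominance, you cannot conclude that $s^{-1}(\mathcal{J}[n])\neq\mathcal{I}^\circ$, and the whole argument collapses at this point.

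The paper supplies exactly the missing ingredient. Given a smooth genus-two curve $C$ and points $q_1,q_2$ with $q_1+q_2\not\sim K_C$, the linear system $|K_C+q_1+q_2|$ maps $C$ birationally onto a plane quartic with a single node at the common image of $q_1$ and $q_2$; a short Bertini argument then shows that any such plane nodal quartic occurs as a tangent hyperplane section $S\cap T_pS$ of some smooth quartic $S\subset\pp^3$. This is what actually proves the dominance you need. The paper, rather than working over the full incidence variety, uses this construction more economically: it produces one explicit plane quartic $B_1$ with the non-torsion property, connects it by a pencil to $B_0=S_0\cap T_pS_0$ where $\rho(S_0)=1$, and runs the two countability arguments (Noether--Lefschetz and torsion) along that single pencil. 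Two minor remarks: since $K_C-q_1-q_2$ is symmetric in $q_1,q_2$, the \'etale double cover is unnecessary; and ``dense open subset'' in your last sentence should be ``complement of countably many proper closed subsets''.
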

\begin{proof}
Let $S_0\subset\pp^3$ be a smooth quartic surface with $\rho(S_0)=1$, $p\in S_0$ and  
$B_0=S_0\cap T_pS_0$. We denote by $B_1\subset T_pS_0$ a plane quartic with just one node at $p$ such that, if $\pi_1: C_1\to B_1$ is its normalization, then $K_{C_1}-q_{11}-q_{12}$ is not a torsion point of $J(C_1)$, where $\{q_{11},q_{12}\}=\pi_1^{-1}(p)$.
Observe that such a plane quartic $B_1$ exists because, if $C$ is a genus two curve, then the morphism $\varphi$ associated to the linear system $|K_{C}+q_1+q_2|$  is birational onto a plane quartic with just one node at  $\varphi(q_1)=\varphi(q_2)$. 

Let $f:\mathcal B\to \pp^1$ with $\mathcal B\subset T_pS_0\times \pp^1$ be  the pencil  of plane quartics  generated by $B_0=f^{-1}(0)$ and $B_1=f^{-1}(1)$. We denote by $\pi: \mathcal C \to \mathcal B$ the normalization of $\mathcal B$ along the section $\{(p,t): t\in \pp^1\}$ and by $J(\mathcal C)\to \pp^1$ its relative jacobian. We will denote by $B_t$ and $C_t$ the fiber of
 $f$ and of $f\circ \pi$ respectively over $t\in \pp^1$. 
 Let $L_t:=K_{C_t}-q_{t1}-q_{t2}$, where $\{q_{t1},q_{t2}\}=\pi^{-1}(p,t)$. Since $L_1$ is not a torsion point of $J(C_1)$ and the set of torsion  sections in $J(\mathcal C)$ is countable, then $L_t$ is not a torsion point of $J(C_t)$ outside of a countable set of $t\in\pp^1$.

Let $\mathcal{S}:=\{(S,t)\in |\Osh_{\pp^3}(4)|\times\pp^1 : B_t\subset S\}$ be the family of quartic surfaces which contain one $B_t$  and let $(S_t,t)$ be a curve in $\mathcal{S}$ which maps birationally onto $\pp^1$ and which contains $(S_0,0)$. Observe that $B_t=S_t\cap T_pS_0$ for each $t\in\pp^1$.
Since $S_0$ is smooth with $\rho(S_0)=1$, then the same is true for $S_t$ outside of a countable set of values (see for example ~\cite[Thm.~1.1]{og}). Thus there exists $t_0\in\pp^1$ such that $S_{t_0}$ is smooth with $\rho(S_{t_0})=1$ and $L_{t_0}$ is not a torsion point of $J(C_{t_0})$.
\end{proof}

\begin{proposition}
There exists a smooth quartic surface $S\subset\pp^3$ with $\rho(S)=1$ and a point $p\in S$ such that, if $\pi: X\to S$ is the blow-up at $p$, then $\Eff(X)$ is rational polyhedral but $R(X)$ is not finitely generated.
\end{proposition}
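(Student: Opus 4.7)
The plan is to take the pair $(S,p)$ provided by Lemma \ref{tor}, let $\pi: X\to S$ be the blow-up at $p$ with exceptional divisor $E$, and to verify the three displayed claims preceding Lemma \ref{tor}. Once established, the conclusion that $R(X)$ is not finitely generated is immediate from \cite[Cor.~2.6]{ahl}, which asserts that $R(X)$ finitely generated implies $\Nef(X) = \SAmple(X)$.

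First I would set up the geometry. By the hypothesis of Lemma \ref{tor}, the normalization $C$ of $S\cap T_pS$ at $p$ is smooth of genus two, so $S\cap T_pS$ has a single node at $p$, and its strict transform $C'$ on $X$ is smooth and isomorphic to $C$. Its class is $\pi^*[S\cap T_pS] - 2E = H - 2E$, so in particular $(H-2E)^2 = 0$. Using $K_X = \pi^*K_S + E = E$ and adjunction, $K_{C'} = (K_X + C')|_{C'} = (H-E)|_{C'}$; hence $(H-2E)|_{C'} \sim K_C - q_1 - q_2$ in $\Pic(C)$, where $\{q_1,q_2\} = C'\cap E$.

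The main step is a cohomological induction showing $h^0(m(H-2E)) = 1$ for every $m\geq 1$. From the restriction sequence
\[
0 \to \Osh_X((m-1)(H-2E)) \to \Osh_X(m(H-2E)) \to \Osh_{C'}(m(K_C - q_1 - q_2)) \to 0,
\]
the non-torsion hypothesis of Lemma \ref{tor} makes $m(K_C - q_1 - q_2)$ a nontrivial degree-zero divisor on $C$, so its $h^0$ vanishes. Induction starting from $h^0(\Osh_X) = 1$ yields $h^0(m(H-2E)) = 1$ for all $m\geq 1$. Therefore $|m(H-2E)| = \{mC'\}$, so $C'$ is a fixed component and no multiple of $H-2E$ is base-point free; in particular $H-2E$ is nef but not semiample.

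Finally I would identify the two cones. Since $\rho(S) = 1$, we have $\rho(X) = 2$, and $[E]$, $[H-2E] = [C']$ are classes of integral curves with $E^2 = -1$ and $(C')^2 = 0$, so they span two distinct extremal rays of $\Eff(X)$. Any effective class not containing $E$ or $C'$ intersects both non-negatively, and a direct computation in the basis $\{[E],[H-2E]\}$ places such a class inside $\langle [E],[H-2E]\rangle$; hence $\Eff(X) = \langle [E],[H-2E]\rangle$, which is rational polyhedral. Dualizing with respect to the intersection form (and using $2E + (H-2E) = H$) gives $\Nef(X) = \langle [H],[H-2E]\rangle$. Since $[H-2E] \in \Nef(X)\setminus\SAmple(X)$, applying \cite[Cor.~2.6]{ahl} finishes the proof. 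I expect the delicate point to be the cohomological induction, which depends crucially on the non-torsion condition in Lemma \ref{tor}; the cone identification is elementary once $\rho(X) = 2$ and the two integral curves are in place.
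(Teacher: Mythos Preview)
Your proposal is correct and follows essentially the same approach as the paper: both take $(S,p)$ from Lemma~\ref{tor}, compute the class $[H-2E]$ of the strict transform of $S\cap T_pS$, use adjunction and the non-torsion hypothesis to run the cohomological induction $h^0(m(H-2E))=1$, and conclude via \cite[Cor.~2.6]{ahl}. The only cosmetic difference is that the paper verifies $\Eff(X)=\langle [E],[H-2E]\rangle$ by peeling off copies of $C$ from a hypothetical effective class $a[H-2E]-b[E]$, whereas you argue by intersecting an irreducible curve with $E$ and $C'$; both are equally elementary.
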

\begin{proof}
Let $S$ and $p$ be as in Lemma~\ref{tor}.
We start proving that $\Eff(X)$ is rational polyhedral. Let $E$ be the exceptional divisor of $\pi$ and $H=\pi^*\Osh_S(1)$. The class of the strict transform $C$ of $B:=S\cap T_pS$ is $[H-2E]$. Since $C$ is integral and $(H-2E)^2=0$, then $[H-2E]$ is nef.

Let $[D]:= [aC-bE]$ where $a,b$ are positive integers. If $D$ is effective, then $D\cdot C<0$ so that $h^0(D) = h^0(D-C)$ and $(a-1)C-bE$ is effective. Applying this reasoning $a$ times we deduce $h^0(-bE)>0$, which is a contradiction. Hence $[D]$  is not effective. Thus $\Eff(X)=\langle [H-2E],[E]\rangle$ is rational polyhedral.

We will now prove that $\SAmple(X)\subsetneq\Nef(X)$, so that $R(X)$ is not finitely generated by~\cite[Cor.~2.6]{ahl}.
Since $K_X\sim E$, then by adjunction formula we have $C_{|C}\sim K_C-E_{|C}\sim K_C-q_1-q_2$, where $\pi(q_1)=\pi(q_2)=p$. By Lemma~\ref{tor} this implies that $C_{|C}$ is not a torsion point of $J(C)$ or, equivalently, that $h^0(nC_{|C})=0$ for any $n>0$. From the exact sequence
\[
0\to H^0((n-1)C)\to H^0(nC)\to H^0(nC_{|C})=0,
\]
and $h^0(\Osh_X)=1$, we get that $h^0(nC)=1$ for any positive $n$. This implies that $H-2E$ is not semiample.
\end{proof}

\bibliographystyle{amsplain}

\begin{thebibliography}{9}

\bibitem{ahl}
M. Artebani, J. Hausen, A. Laface:
{\em On Cox rings of K3-surfaces},
Compositio Math., 146 (2010), 964-998.

\bibitem{art}
M. Artin:
{\em Some numerical criteria for contractability of curves on algebraic surfaces},
Amer. J. Math., 84 (1962), 485--496.

\bibitem{bpv}
W.P. Barth, K. Hulek, C.A.M. Peters, A. Van de Ven:
{\em Compact complex surfaces}, 
Ergebnisse der Mathematik und ihrer Grenzgebiete. 3. Folge. 
A Series of Modern Surveys in Mathematics, 4, Second edition, 
Springer-Verlag, Berlin, 2004.

\bibitem{bp}
V.~Batyrev, O.~Popov:
{\em The Cox ring of a del Pezzo surface.} 
Arithmetic of higher-dimensional algebraic varieties 
(Palo Alto, CA, 2002), 85--103,
Progr. Math., 226, Birkh\"auser Boston, Boston, MA, 2004. 

\bibitem{b}
A. Beauville:
{\em Complex algebraic surfaces},
Second edition. London Mathematical Society Student Texts, 34. Cambridge University Press, Cambridge, 1996.

\bibitem{cd}
F.R. Cossec and I.V. Dolgachev:
{\em Enriques surfaces I},
Birkh\"auser, Progress in Mathematics, Vol. 76, 1989.

\bibitem{cox} 
D.~Cox:
{\em The homogeneous coordinate ring of a toric variety}.
J. Algebraic Geom., 4 (1995), no.1, 17--50. 

\bibitem{deb}
O. Debarre:
{\em Higher dimensional algebraic geometry},
Universitext. Springer-Verlag, New York, 2001.

\bibitem{d}
I.V. Dolgachev:
{\em Rational surfaces with a pencil of elliptic curves (Russian)}, 
Izv. Akad. Nauk SSSR  Ser. Mat., 30 (1966), no.5, 1073--1100.

\bibitem{har1}
B. Harbourne:
{\em Anticanonical rational surfaces},
Trans. Amer. Math. Soc., 349 (1997), no.3, 1191--1208.

\bibitem{har}
B. Harbourne:
{\em Global aspects of the geometry of surfaces},
Ann. Univ. Paedagogicae Cracoviensis, 9 (2010), 5--41.

\bibitem{hk}
Y.~Hu, S.~Keel:
{\em Mori dream spaces and GIT}. 
Michigan Math. J. 48, (2000), 331--348.

\bibitem{is}
V.A. Iskovskikh:
{\em Bertini theorems},
Encyclopaedia of Mathematics, Springer-Verlag Berlin Heidelberg New York,
http://eom.springer.de/B/b015770.htm.

\bibitem{laz}
R. Lazarsfeld:
{\em Positivity in algebraic geometry. I},
Classical setting: line bundles and linear series. A Series of Modern Surveys in Mathematics, 48. Springer-Verlag, Berlin, 2004. 

\bibitem{na}
N.~Nakayama:
{\em Classification of log del Pezzo surfaces of index two},
J. Math. Sci. Univ. Tokyo, 14 (2007), 293--498.

\bibitem{n1}
V.V. Nikulin:
{\em A remark on algebraic surfaces with polyhedral Mori cone},
Nagoya Math. J., 157 (2000), 73--92.

\bibitem{o}
A. Ogg:
{\em Cohomology of abelian varieties over function fields},
Ann. Math., 76 (1962), 185--212.

\bibitem{og}
K. Oguiso:
{\em Local families of K3 surfaces and applications},
J. Algebraic Geom., 12 (2003), 405--433.

\bibitem{sh}
I.R. Shafarevich:
{\em Principal homogeneous spaces over function fields},
Proced. Steklov Inst. Math., 64 (1961), 316--346.

\bibitem{tvv}
D.~Testa, A.~Varilly, M.~Velasco:
{\em Big rational surfaces},
Math. Ann., DOI: 10.1007/s00208-010-0590-7.

\bibitem{to1}
B. Totaro:
{\em Hilbert's 14th problem over finite fields and a conjecture on the cone of curves}, Compos. Math., 144 (2008), no. 5, 1176--1198.

\bibitem{to2} 
B. Totaro:
{\em The cone conjecture for Calabi-Yau pairs in dimension two},  
Duke Math. J., 154 (2010), no. 2, 241--263. 


\end{thebibliography}

\end{document}